\documentclass[10pt]{amsart}

\usepackage{amsmath} \usepackage{amssymb, amscd,cancel, graphicx,soul}

	\usepackage{pinlabel,mathtools}

\usepackage{hyperref}
\hypersetup{
    colorlinks=true, %set true if you want colored links
    linktoc=all,     %set to all if you want both sections and subsections linked
    linkcolor=black,  %choose some color if you want links to stand out
}

\usepackage{enumerate}

\usepackage{cleveref}

\usepackage{tikz-cd}

\usepackage{mathdots}
\newtheorem{thm}{Theorem}[section]
\newtheorem{letteredthm}{Theorem}

\newtheorem{cor}[thm]{Corollary}
\newtheorem{lem}[thm]{Lemma}
\Crefname{lem}{Lemma}{Lemmas}
\newtheorem{prop}[thm]{Proposition}

\newtheorem*{ques*}{Question}

\newtheorem*{example*}{Example}
\newtheorem*{porism*}{Porism}
\newtheorem{scholium}[thm]{Scholium}

\newtheorem*{thm*}{Theorem}
\newtheorem*{defin*}{Definition}
\newtheorem*{lem*}{Lemma}
\newtheorem*{prop*}{Proposition}
\newtheorem*{remark*}{Remark}

\def\cA{{\mathcal A}}

\def\cS{{\mathcal S}}

\def\JF{{\rm JF}}
\def\JFC{{\rm JFC}}

\def\bC{{\mathbb C}}

\def\bF{{\mathbb F}}

\def\cL{{\mathcal L}}
\def\cR{{\mathcal R}}

\def\bR{{\mathbb R}}

\def\bZ{{\mathbb Z}}

\def\Crit{{\mathrm{Crit}}}

\def\Emb{{\mathrm{Emb}}}

\def\del{{\partial}}

\def\graph{{\textup{graph}}}

\def\ev{{\mathrm{ev}}}

\newcommand{\into}{\hookrightarrow}

\begin{document}
\thispagestyle{empty}
\title[Jordan curves inscribe a positive measure of rectangles]{Jordan curves inscribe a positive measure of rectangles}
\author{Joshua Evan Greene} 
\address{Department of Mathematics, Boston College, USA}
\email{joshua.greene@bc.edu}
\urladdr{https://sites.google.com/bc.edu/joshua-e-greene}
\author{Andrew Lobb} 
\address{Mathematical Sciences,
	Durham University,
	UK}
\email{andrew.lobb@durham.ac.uk}
\urladdr{http://www.maths.dur.ac.uk/users/andrew.lobb/}
\thanks{JEG was supported by the National Science Foundation under Award No.~DMS-2304856 and by a Simons Fellowship.}

%%%
%%%
%%%

\begin{abstract}
Suppose that $\gamma \subset \bC$ is a Jordan curve of diameter $2R$ which encloses a region of area $A$.
We prove that there exists a subset $I \subset (0,\pi)$ of measure at least $A/R^2$ such that if $\theta \in I$, then there exist four points on $\gamma$ at the vertices of a rectangle whose diagonals meet at angle $\theta$.
\end{abstract}

\maketitle

%%%
%%%
%%%

\section{Introduction}
\label{sec:intro}
A polygon $P \subset \bC$ \emph{inscribes} in a Jordan curve $\gamma \subset \bC$ if there exists an orientation-preserving similar copy of $P$ whose vertex set is contained in $\gamma$.
The case of a quadrilateral $P$ is distinguished, because a generic differentiable curve $\gamma$ admits a finite set of inscriptions of $P$.
In this case, it is interesting to determine whether this set is nonempty.
The original problem in this domain is the Square Peg Problem due to Toeplitz, which asks whether every Jordan curve $\gamma \subset \bC$ inscribes a square \cite{toeplitz1911}.  It is still unsolved.

For Jordan curves possessing additional regularity, much is now known.
Schnirelmann used a bordism argument to affirmatively solve the Square Peg Problem for differentiable curves \cite{schnirelman1929}.
We used symplectic techniques to prove the optimal result that every quadrilateral that inscribes in a circle also inscribes in every differentiable Jordan curve \cite{greenelobb1,greenelobb2}.
On the other hand, the only quadrilaterals which inscribe in the circle and in all triangles are isosceles trapezoids \cite{pak}.
The Trapezoidal Peg Problem for continuous curves asks whether every isosceles trapezoid inscribes in every Jordan curve.
Intermediate between it and Toeplitz's original question, the Rectangular Peg Problem for continuous curves asks whether every rectangle inscribes in every Jordan curve.
Asano and Ike used microlocal sheaf techniques to affirmatively solve the Rectangular Peg Problem for rectifiable curves \cite{asanoike}.

For Jordan curves without any regularity, much less is known.
Vaughan gave a proof {\em from the book} that every Jordan curve inscribes a rectangle \cite{meyerson1981}.
Schwartz improved this result by showing that all but at most four points on every Jordan curve are vertices of inscribed rectangles in the curve \cite{schwartz2020}.
The result is sharp, by considering the case of a noncircular ellipse.
The techniques used to prove these results are not symplectic, and they do not detect the similarity classes of the inscribed rectangles.

The purpose of this note is to obtain progress on the Rectangular Peg Problem for continuous curves using symplectic techniques.
To state our main result, we introduce some notation.
For an angle $0 < \theta < \pi$, a {\em $\theta$-rectangle} is a rectangle in the plane whose diagonals make angle $\theta$ with one another.
For a Jordan curve $\gamma \subset \bC$, let $A(\gamma)$ denote the area enclosed by $\gamma$, let $R(\gamma)$ denote the radius (half of the diameter) of $\gamma$, and let $B(\gamma) = A(\gamma) / R(\gamma)^2$.
The notation honors Bieberbach: the 2-dimensional case of his Isodiametric Inequality reads $0 < B(\gamma) \le \pi$, and $B(\gamma) = \pi$ if and only if $\gamma$ is a circle \cite{Bieberbach1915}.

\begin{letteredthm}
	\label{thm:mainthm}
	If $\gamma \subset \bC$ is a Jordan curve, then there exists a subset $\cS(\gamma) \subset (0, \pi)$ of measure at least $B(\gamma)$ such that for all $\theta \in \cS(\gamma)$, $\gamma$ inscribes a $\theta$-rectangle.
\end{letteredthm}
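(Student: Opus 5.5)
We follow the symplectic strategy of \cite{greenelobb1,greenelobb2}. Identify unordered pairs of points of $\gamma$ with a torus: let $\ell: \bC^2 \to \bC^2$ be $\ell(z,w) = \bigl(\tfrac{z+w}{2}, \tfrac{z-w}{2}\bigr)$ and compose with $(c,d)\mapsto (c, d^2/|d|)$, or equivalently use the map $(c,d)\mapsto(c,d^2)$ suitably normalized. The image of $\gamma\times\gamma$ is then a topological torus $T \subset \bC^2$, or a Möbius band whose boundary is the diagonal, which we close up by gluing in a copy of $\gamma \times\{0\}$. Two chords of $\gamma$ with a common midpoint and a common length form the diagonals of an inscribed rectangle, and the angle between them is read off from the argument of $d^2$. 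Rotating the second factor by $e^{i\theta}$, that is, applying $R_\theta(c,u)=(c,e^{i\theta}u)$, we get that $\gamma$ inscribes a $\theta$-rectangle exactly when $T \cap R_\theta(T)$ contains a point off the diagonal circle $\gamma\times\{0\}$.

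First, assume $\gamma$ is smooth; we pass to general Jordan curves by approximation at the end. When $\gamma$ is smooth, $T$ is a Lagrangian torus for the form $\omega = \omega_1 - \omega_2$, possibly after a compensating rescaling, as in \cite{greenelobb2}. Each $R_\theta$ is a Hamiltonian symplectomorphism generated by $H = \tfrac{1}{2}|u|^2$. Suppose that for $\theta$ in a set $J$ there is no $\theta$-rectangle, so that $T\cap R_\theta(T)$ is exactly the circle $\gamma\times\{0\}$. We may then glue or surger $T$ with $R_\theta(T)$ along this clean intersection to produce an embedded Lagrangian Klein bottle or torus. The symplectic area class of the surgered surface is forced: a loop traversing the $u$-direction picks up area equal to the flux swept by the rotation. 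This flux is $\theta$ times a quantity controlled by $A(\gamma)$, balanced against the other generator, which has area proportional to $A(\gamma)$. Nemirovski--Shevchishin (no Lagrangian Klein bottle in $\bC^2$) and Polterovich/Viterbo-type constraints (a monotonicity or area obstruction for Lagrangian tori inside a ball) then give a contradiction unless $\theta$ satisfies an inequality.

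I would make this more robust and quantitative as follows. The Lagrangian $T$ lies in a ball whose radius is controlled by $R(\gamma)$. A displacement-energy bound says that $T$ cannot be displaced by the rotation family unless the Hofer energy exceeds the minimal area, which is of order $A(\gamma)$. The Hofer length of the path $\{R_\theta\}_{\theta\in J}$, restricted to the ball of radius $\sim R$, is at most $|J|\cdot R^2$, suitably normalized. A Chekanov-type argument, namely that the Lagrangian intersection persists while the energy is less than the minimal area, should then show that the set of $\theta$ with a nontrivial intersection has measure at least $A/R^2$. One must also check that the intersections found avoid the diagonal circle. That check would use a Floer-theoretic count in which the clean circle $\gamma\times\{0\}$ contributes in a controlled way, so that extra intersection points are forced.

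Finally, for a general Jordan curve, approximate it by smooth curves $\gamma_n \to \gamma$ with $A(\gamma_n)\to A$ and $R(\gamma_n)\to R$. The sets $\cS(\gamma_n)$ have measure at least $B(\gamma_n)$, so their limsup set has measure at least $B(\gamma)$. For $\theta$ in the limsup we extract convergent inscribed $\theta$-rectangles. We must rule out degeneration to a point, and the angle constraint does this: a $\theta$-rectangle shrinking to a point would contradict the quantitative estimate, which can be made scale-aware by also demanding a lower bound on the rectangle size.

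The main obstacle is the quantitative step: turning the nonexistence of rectangles at angle $\theta$ into an energy or area contradiction that yields exactly the measure bound $A/R^2$. The first thing I would try is to apply the surgery/Klein-bottle argument with $\theta$ varying over a whole interval of bad angles. Because the area class of the resulting Lagrangian depends linearly on $\theta$, area obstructions in a ball of radius $R$ should limit the total bad measure to $\pi - A/R^2$.
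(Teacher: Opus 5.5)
\textbf{There is a genuine gap, and it is exactly the step you flag as the main obstacle.}

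For smooth curves the qualitative statement already holds at every angle. The Klein-bottle surgery argument of \cite{greenelobb1} shows that each smooth $\gamma_n$ inscribes a $\theta$-rectangle for all $\theta \in (0,\pi)$. So the sets $\cS(\gamma_n)$ in your last paragraph are all of $(0,\pi)$, and the limsup step carries no information. The whole content of the theorem is keeping the inscribed rectangles of $\gamma_n$ from collapsing to a point in the $C^0$ limit.

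What you need is a uniform, scale-aware statement: for each $\epsilon>0$, a set of angles of measure at least $(A(\gamma_n)-\pi\epsilon)/R(\gamma_n)^2$ on which $\gamma_n$ inscribes a $\theta$-rectangle $P$ with $R(P)^2\ge\epsilon$. None of your tools produce this:
\begin{enumerate}
\item The surgery/Nemirovski--Shevchishin obstruction is a yes/no statement at a single angle and is blind to the size of the rectangles.
\item The ``linear dependence of the area class on $\theta$'' and the ``Polterovich/Viterbo-type constraints'' are never turned into an actual inequality.
\item A Chekanov-type persistence argument tracks the existence of intersections. Here existence never fails: neither the clean diagonal circle nor, for smooth curves, the off-diagonal points ever disappear.
\item Your Hofer bound $|J|\cdot R^2$ is just the Lipschitz estimate. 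It charges every angle $R^2$ regardless of whether the rectangles there are large or tiny, so it only gives back $A\le \pi R^2$.
\end{enumerate}

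\textbf{The missing idea is to track the action of the intersection points, not merely their existence.} The paper uses the Jordan Floer spectral invariant $\ell(\gamma_n,\theta)$. It increases from $0$ to $A(\gamma_n)$ on $[0,\pi]$ and is $R(\gamma_n)^2$-Lipschitz.

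The paper then proves a new Derivative Property: for generic smooth $\gamma_n$ and almost every $\theta$, $\ell'(\gamma_n,\theta)=H(z,w)=R(P)^2$ for some inscribed $\theta$-rectangle $P$. This combines three ingredients:
\begin{enumerate}
\item spectrality;
\item a genericity theorem making the inscribed rectangles vary smoothly in $\theta$ over the regular values of the projection $\cR(\gamma_n)\to S^1$;
\item the identity $\frac{d}{ds}\cA(p_s)=H(p_s)$ along a smooth family of intersection points.
\end{enumerate}

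Integrating gives $A(\gamma_n)=\int_0^\pi \ell'\,d\theta\le R(\gamma_n)^2\mu(S_\epsilon)+\pi\epsilon$, where $S_\epsilon=\{\ell'\ge\epsilon\}$. The bad angles are charged only $\epsilon$ per unit angle instead of $R^2$, which is precisely the scale-aware estimate you were missing.

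The bound $R(P)^2\ge\epsilon$ survives passage to a convergent subsequence, so the limit is a genuine rectangle. Lemma~\ref{lem:measure} controls the limsup of the sets $S_\epsilon$. Taking the union over $\epsilon>0$ yields measure at least $B(\gamma)$. Without some analogue of this derivative formula, tying the rate of change of an action-type invariant to the size of an actual inscribed rectangle, your approach cannot reach the bound $B(\gamma)$.
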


The proof of \Cref{thm:mainthm} is based on our earlier construction of Jordan Floer homology \cite{greenelobb3,greenelobb4}.
In particular, we establish and use some properties of the spectral invariants that are defined in that theory.
Previously, we used properties of the spectral invariants to prove \Cref{thm:mainthm} under the assumption that $\gamma$ is rectifiable, with the added conclusion that $\cS(\gamma)$ can be chosen to be an interval \cite[Theorem A]{greenelobb3}.
Similarly, spectral invariants are used in the microlocal sheaf approach to solve the Rectangular Peg Problem for rectifiable curves \cite[Corollary 1.2]{asanoike}.
We give a short proof of the following result, intermediate in strength between these two results:

\begin{letteredthm}
	\label{thm:otherthm}
	If $\gamma \subset \bC$ is a rectifiable Jordan curve, then $\gamma$ inscribes a $\theta$-rectangle for all $0 < \theta < B(\gamma)$.
\end{letteredthm}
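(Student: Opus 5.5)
Theorem~\ref{thm:otherthm} will follow from the standard symplectic reformulation of rectangle inscription, together with an area–action comparison that uses rectifiability. In $\bC^2$ with the standard symplectic form $\omega$, consider the Lagrangian torus $T = \gamma \times \gamma$. An inscribed $\theta$-rectangle corresponds to a pair of distinct points $\{x,y\}$ and $\{x',y'\}$ on $\gamma$ such that:
\begin{itemize}
\item the segments $xy$ and $x'y'$ share a midpoint,
\item they have equal length,
\item they make angle $\theta$ with one another.
\end{itemize}
Apply the linear symplectomorphism $(z,w)\mapsto \frac{1}{\sqrt2}(z+w,\,z-w)$, which records midpoint and half-difference. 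Under it, $T$ becomes a Lagrangian torus $L$. An inscribed $\theta$-rectangle then becomes an intersection point of $L$ with $R_\theta(L)$ lying off the diagonal-type locus, where $R_\theta$ rotates the second factor by $e^{i\theta}$. Passing to the quotient by the swap involution (the M\"obius band picture), rectangles correspond to non-clean intersections of $L$ with $R_\theta L$ away from the clean intersection circle $\Gamma$ coming from the pairs $x=y$.

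The plan has three steps.

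\textbf{Step 1: closing the curve.} Assume first that $\gamma$ is smooth, and suppose for contradiction that for some $0<\theta<B(\gamma)$ there is no $\theta$-rectangle. Since rectifiable curves can be approximated by smooth ones with controlled area and diameter, and inscribed rectangles with angle bounded away from $0$ and $\pi$ pass to limits without degenerating (the diameter stays bounded below), it suffices to handle smooth $\gamma$ with a little slack in $\theta$. Under the no-rectangle assumption, $L\cap R_t L=\Gamma$ for all $t$ near $\theta$. Following the earlier papers, the family $R_t L$ for $t\in[0,\theta]$ sweeps out a Lagrangian isotopy, which we glue along $\Gamma$. The result is an embedded Lagrangian torus (or Klein bottle) $K$ in $\bC^2$, built from $L$ and $R_\theta L$ by a surgery along $\Gamma$.

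\textbf{Step 2: computing the action.} Compute the symplectic area class of $K$ on a basis of $H_1(K)$. One generator is the meridian-type loop coming from $\gamma$ itself. Its action is $2A(\gamma)$, up to the normalization fixed by the linear map. The other generator is a loop that crosses the surgery region. Its action is $\theta$ times the squared radius of the relevant circle. The crucial estimate is that this second action is at most $\theta R(\gamma)^2$ in the normalized units, because every half-difference $|x-y|/2$ is at most $R(\gamma)$.

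\textbf{Step 3: the obstruction.} Invoke an obstruction to the existence of such a Lagrangian. For an embedded monotone-type torus in $\bC^2$, the area class must satisfy a constraint in the style of Chekanov or Cieliebak--Mohnke: the minimal positive action is bounded in terms of the displacement energy, which here is governed by the action of the $\Gamma$-loop. The resulting inequality is $\theta R^2 \ge A$, that is, $\theta\ge B(\gamma)$, contradicting $\theta<B(\gamma)$.

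The main obstacle is Step 3 combined with the surgery in Step 1: making the glued Lagrangian genuinely embedded and identifying which Floer-theoretic or capacity bound yields exactly $A\le\theta R^2$. For this I would first try the spectral-invariant framework of the paper. Rather than building $K$ explicitly, I would show that the spectral invariant of the Jordan Floer complex under the rotation isotopy changes continuously. Its value at $t=0$ differs from its value at $t=\theta$ by the area term $A(\gamma)$. Its rate of change is bounded by $R(\gamma)^2$ as long as no non-clean intersections appear. Integrating this rate bound over $[0,\theta]$ then gives the contradiction whenever $\theta<B(\gamma)$, and rectifiability is exactly what makes the action functional, and hence these spectral invariants, well-defined.
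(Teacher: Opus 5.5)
\textbf{The gap is in your reduction at the start of Step 1.} You assert that inscribed rectangles with angle bounded away from $0$ and $\pi$ ``pass to limits without degenerating (the diameter stays bounded below).'' Nothing forces this. Suppose a smooth approximant $\gamma_n$ carries a small, nearly round lobe of radius $1/n$. Then it inscribes $\theta$-rectangles of diameter about $2/n$, and these converge to a single point of $\gamma$. So an arbitrary choice of $\theta$-rectangles in the $\gamma_n$ gives nothing in the limit.

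This is the whole content of the theorem, not a technicality. Every smooth Jordan curve already inscribes a $\theta$-rectangle for every $\theta \in (0,\pi)$, so the smooth case you set out to prove in Steps 1--3 is already known. What must be shown is exactly non-degeneration in the limit. Your Steps 1--3 are a proof by contradiction, so they produce rectangles with no quantitative control that could prevent collapse. They also spend the hypothesis $\theta < B(\gamma)$ on the wrong task: ruling out the absence of rectangles, which never happens for smooth curves. Separately, the Chekanov/Cieliebak--Mohnke-style inequality in Step 3 is not an available theorem in the form you need.

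\textbf{The missing idea is to \emph{select} the rectangles by their action}, and to bound that action away from both $0$ and $A(\gamma)$. By Spectrality, pick $\theta$-rectangles $P_n \subset \gamma_n$ with $\cA(P_n) = \ell(\gamma_n,\theta)$, and let $L$ be the limiting action.

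The upper bound is what your last paragraph computes, once corrected. The function $\ell(\gamma_n,\cdot)$ is $0$ at $0$ and reaches $A(\gamma)$ only at $\pi$, not at $\theta$. So integrating the Lipschitz bound $R(\gamma_n)^2$ over $[0,\theta]$ gives no contradiction. It gives $\ell(\gamma_n,\theta) \le \theta R(\gamma_n)^2$, hence $L < A(\gamma)$, and this is exactly where $\theta < B(\gamma)$ is used.

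The lower bound, which you are missing, comes from the Triangle Inequality: $A(\gamma) = \ell(\gamma_n,\pi) \le k\,\ell(\gamma_n,\pi/k)$. By Monotonicity, $\ell(\gamma_n,\theta) \ge A(\gamma)/k$ once $\pi/k \le \theta$, so $L > 0$.

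Rectifiability then enters through Lemma~\ref{lem:rect}, not through the definition of the spectral invariants, which are only used for the smooth $\gamma_n$. For rectifiable $\gamma$, a limit of inscribed rectangles whose actions converge to a value strictly between $0$ and $A(\gamma)$ cannot collapse to a point. Hence $P$ is a genuine inscribed $\theta$-rectangle.
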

% check \le vs <

\noindent
\Cref{thm:otherthm} implies a positive solution to the Rectifiable Peg Problem for every rectifiable Jordan curve $\gamma$ with $B(\gamma) > \pi/2$, i.e.~which encloses more than half of the area of a circle of equal radius.

The remainder of the paper is organized as follows.
\Cref{sec:thms} contains the proofs of Theorems \ref{thm:mainthm} and \ref{thm:otherthm}.
The proof of \Cref{thm:otherthm} relies on the Triangle Inequality for spectral invariants, which already appeared in the literature.
The proof of \Cref{thm:mainthm} relies on a Derivative Property for spectral invariants, which is new.
It relates the derivative of the spectral invariant to the diameter of an inscribed rectangle.
We also state a few refinements of \Cref{thm:mainthm} which follow from its proof.
\Cref{sec:genericity} describes the result that for an open, dense set of smooth Jordan curves, the space of inscribed rectangles and binormals has the structure of a smooth, 1-dimensional, closed manifold.
It is an analogue for smooth Jordan curves of a result of Schwartz for PL Jordan curves \cite{schwartz2020}.
Its proof is given in the Appendix.
\Cref{sec:action} equates a derivative of the action with the value of the Hamiltonian in the setting of Lagrangian Floer homology.
We expect that the main result of this section is well-known to experts, although we could not locate such a statement in the literature.
\Cref{sec:derivative} combines the results of \Cref{sec:genericity} and \Cref{sec:action} in order to establish the Derivative Property.

%%%
%%%
%%%

\section{Proofs of Theorems \ref{thm:mainthm} and \ref{thm:otherthm}}
\label{sec:thms}
We prove Theorems \ref{thm:mainthm} and \ref{thm:otherthm} in this Section.
We begin with the proof of \Cref{thm:otherthm}, which reviews our framework and sets up the related proof of \Cref{thm:mainthm}.
We then state a new property of the spectral invariants that we need to prove \Cref{thm:mainthm}, which gets proven over the course of the later sections.

Fix a smooth Jordan curve $\gamma \subset \bC$ and an angle $0 < \theta < \pi$.
Jordan Floer homology is the homology of an integer-graded $\bF_2$-chain complex $\JFC(\gamma,\theta)$, which is generically generated by inscriptions of a $\theta$-rectangle in $\gamma$.
It carries a real-valued action filtration $\cA : \JFC(\gamma,\theta) \to \bR$.
Its homology is a 2-dimensional graded $\bF_2$-vector space $\JF(\gamma,\theta) \approx (\bF_2)_{(2)} \oplus (\bF_2)_{(1)}$, and the action filtration descends to a spectral invariant $\ell : \JF(\gamma,\theta) \to \bR$.
We abbreviate by $\ell(\gamma,\theta)$ the spectral invariant attached to the homogeneous element in grading 2.
The spectral invariant enjoys several useful properties for a fixed smooth curve $\gamma$ \cite{greenelobb3,leczap}:
\begin{itemize}
\item {\bf Monotonicity:}
it defines a monotone increasing bijection $\ell(\gamma,\cdot) : [0,\pi] \longrightarrow [0,A(\gamma)]$.
\item {\bf Lipschitz Continuity:}
it is Lipschitz continuous in $\theta$ with Lipschitz constant $R(\gamma)^2$.
\item {\bf Spectrality:}
there exists an inscribed $\theta$-rectangle $P$ in $\gamma$ for which $\ell(\gamma,\theta) = \cA(P)$.
\item {\bf Triangle Inequality:}
for all angles $\theta_1, \theta_2 \ge 0$ such that $\theta_1 + \theta_2 \le \pi$, we have
\[
\ell(\gamma,\theta_1+\theta_2) \le \ell(\gamma,\theta_1) + \ell(\gamma,\theta_2).
\]
\end{itemize}

Now drop the smoothness hypothesis, so that $\gamma \subset \bC$ denotes an arbitrary Jordan curve.
Approximate $\gamma$ in $C^0$ by a sequence of smooth Jordan curves $\gamma_n$, each enclosing area $A(\gamma)$.
By Spectrality, there exists a $\theta$-rectangle $P_n$ inscribed in $\gamma_n$ such that $\cA(P_n) = \ell(\gamma_n,\theta)$.
By passing to a convergent subsequence, we may assume that (a) $L := \lim_{n \to \infty} \ell(\gamma_n,\theta)$ exists and (b) the $P_n$ converge to a point set $P$ inscribed in $\gamma$.
A priori, $P$ is either a single point or a $\theta$-rectangle.

\begin{lem}[Lemma 5.1, \cite{greenelobb3}]
\label{lem:rect}
If $\gamma$ is rectifiable and $0 < L < A(\gamma)$, then $P$ is an inscribed $\theta$-rectangle in $\gamma$. \hfill $\qed$\end{lem}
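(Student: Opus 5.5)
We argue by contradiction: assume $P$ degenerates to a single point $p \in \gamma$, and show that then $L \in \{0, A(\gamma)\}$. The key input is a formula for the action of an inscribed rectangle. Recall from \cite{greenelobb3} how $\cA$ is set up. The $\theta$-rectangle $P_n$ inscribed in $\gamma_n$ corresponds to an intersection point of two Lagrangian tori in $\bC^2$. One torus is $\gamma_n \times \gamma_n$. The other is its image under the symplectomorphism $(z,w)\mapsto\bigl(\tfrac{z+w}{2},\tfrac{z-w}{2}\bigr)$ composed with rotation by $\theta$ in the second factor. With this normalization, $\cA(P_n)$ equals a signed area: the area enclosed by a closed loop built from two arcs of $\gamma_n$ together with a piece of bounded size determined by the rectangle. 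Concretely, label the vertices $a_n,b_n,c_n,d_n$ in order. Then $\cA(P_n)$ is, up to a controlled term, the area swept by the arc of $\gamma_n$ from $a_n$ to $b_n$ together with its $\theta$-rotated partner about the center of $P_n$. This identity is exactly what makes $\cA$ take values in $[0,A(\gamma_n)]$.

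First, we isolate the error term. We would write
\[
\cA(P_n) = \mathrm{Area}(\text{region bounded by arcs of } \gamma_n \text{ between consecutive vertices}) + E_n,
\]
where $|E_n| \le C\,\mathrm{diam}(P_n)^2$. As $P_n$ shrinks to $p$, this gives $E_n \to 0$.

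Second, we handle the area term. The vertices all collapse to $p$. The relevant arcs of $\gamma_n$ therefore join points that are close together, and each is either a short arc near $p$ or the long complementary arc. Since $\gamma_n \to \gamma$ in $C^0$, the region cut off is, in the limit, either a region of area tending to $0$ or its complement in the disk, of area tending to $A(\gamma)$. Along a subsequence, $\cA(P_n)$ therefore tends to $0$ or to $A(\gamma)$, contradicting $0<L<A(\gamma)$. Hence $P$ is a genuine $\theta$-rectangle. Its angle is $\theta$ because the angle is a closed condition once the diameter stays bounded below.

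The main obstacle is the second step: an arc of $\gamma_n$ joining two nearby points need not be short or enclose small area. A nonrectifiable, or merely $C^0$-approximated, curve can wiggle so that short chords cut off thin regions of large perimeter. This is where rectifiability enters, and we would use it through the lengths. We choose the $\gamma_n$ to be smooth approximations with length converging to the length of $\gamma$, for example by mollification or inscribed polygons smoothed off. Then arc length between the vertices of $P_n$ converges along the subsequence, and short arcs have length tending to $0$. By the isoperimetric inequality, the area swept by such an arc together with a short chord is $O(\text{length}^2) \to 0$, while the complementary region has area tending to $A(\gamma)$. The rotated partner arc has the same length, so the same bound applies to it. This area estimate, which controls the area cut off by a short arc by its length squared, is the crux, and it genuinely fails without rectifiability. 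That failure is why Theorem~\ref{thm:mainthm} needs a different argument.
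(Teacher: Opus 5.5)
The paper does not prove this lemma itself; it is quoted from \cite{greenelobb3}. Your outline is the natural argument: if $P$ collapses to a point $p$, then $\cA(P_n)$ can only accumulate at $0$ or $A(\gamma)$, and rectifiability controls the part of $\gamma_n$ near $p$. Two steps, however, are wrong or unjustified as written.

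\textbf{The action identity.} It is not ``an arc together with its $\theta$-rotated partner''. Combine the formula of \Cref{sec:action} (with $H\equiv R(P_n)^2$ along the trajectory $\tau$) with Stokes. For any primitive $\lambda$ of $\omega$ this gives $\cA(P_n)=\theta R(P_n)^2-\int_\tau\lambda+\int_{\beta_n}\lambda$. Here $\beta_n\subset\gamma_n\times\gamma_n\setminus\Delta(\gamma_n)$ is the top edge of the capping, running from $(z_n,w_n)$ to $\rho_\theta(z_n,w_n)=(z_n',w_n')$.
\begin{itemize}
\item Choosing $\lambda$ radial about the centre $c_n$ of $P_n$ makes the first two terms $O(R(P_n)^2)$. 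This confirms your $E_n$.
\item $\int_{\beta_n}\lambda$ is the sum of $\int\lambda_{c_n}$ over a path in $\gamma_n$ from $z_n$ to $z_n'$ and over a path from $w_n$ to $w_n'$. These are two unrelated arcs, so your ``same length'' remark does not apply; each must be handled on its own.
\item The homotopy class of $\beta_n$ is forced by requiring the capping to avoid $\Delta(\bC)$, i.e.\ $\arg(z-w)$ must turn by exactly $\theta$ along $\beta_n$. Since $\gamma_n$ may wiggle a lot near $p$, these paths can wrap around $\gamma_n$ several times.
\end{itemize}
So what you actually get is $\cA(P_n)=k_nA(\gamma)+o(1)$ with $k_n\in\bZ$, once the contributions of arcs of parameter length tending to $0$ are shown to vanish. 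You then need Monotonicity, $0\le\ell(\gamma_n,\theta)\le A(\gamma)$, to conclude $L\in\{0,A(\gamma)\}$. The identity itself does not place individual actions in $[0,A(\gamma_n)]$.

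\textbf{The approximating curves.} In the setup before the lemma the $\gamma_n$ are arbitrary smooth $C^0$-approximations, but your argument needs $\sup_n \mathrm{len}(\gamma_n)<\infty$. That is an extra condition you must impose when choosing the $\gamma_n$. This is legitimate in the proof of \Cref{thm:otherthm}, but it must be said.

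Bounded length already suffices. We have $|\int_\alpha\lambda_{c}|\le\frac12\,\mathrm{len}(\alpha)\max_{x\in\alpha}|x-c|$. Arcs of parameter length tending to $0$ stay within $o(1)$ of $p$, because the parametrizations converge uniformly and the limit is injective. This bound also spares you an isoperimetric inequality for the non-simple loops that can arise.

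You must also show the condition can be met by smooth \emph{embedded} curves enclosing area $A(\gamma)$. Mollification and smoothed inscribed polygons do not do this automatically. Both can create self-intersections where $\gamma$ has fine hairpins, and mollification can also create zeros of the derivative. A construction that works:
\begin{itemize}
\item Take the images $\phi(\{|\zeta|=r\})$, $r\to1$, of circles under a Riemann map $\phi$ of the disk onto the interior of $\gamma$.
\item By Carath\'eodory these converge uniformly as parametrized curves.
\item Rectifiability gives $\phi'\in H^1$ (F.~and M.~Riesz), so their lengths are at most $\mathrm{len}(\gamma)$.
\item A rescaling by factors tending to $1$ then fixes the enclosed area at $A(\gamma)$.
\end{itemize}
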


\begin{proof}[Proof of \Cref{thm:otherthm}]
Fix a value $0 < \theta < B(\gamma)$, and define $\gamma_n$ and $L$ as above.
We will verify the hypothesis of Lemma \ref{lem:rect}.

By an iterated application of the Triangle Inequality, we have
\[
\ell(\gamma_n, \theta_1+\cdots+\theta_k) \le \ell(\gamma_n,\theta_1) + \cdots + \ell(\gamma_n,\theta_k)
\]
for all $n \ge 1$, for all $k \ge 1$, and for all angles $\theta_1,\dots,\theta_k \ge 0$ such that $\theta_1+ \cdots + \theta_k \le \pi$.
Take $\theta_1 = \cdots = \theta_k = \pi/k$ and use $\ell(\gamma_n,\pi) = A(\gamma)$ to obtain $\ell(\gamma_n,\pi/k) \ge A(\gamma) / k$ for all $k \ge 1$.
Fix $k$ large enough so that $\theta \ge \pi/k$.
By Monotonicity, it follows that $\ell(\gamma_n,\theta) \ge \ell(\gamma_n,\pi/k) \ge A(\gamma)/k$ for all $n$.
Consequently, $L > 0$.

On the other hand, Lipschitz Continuity and $\ell(\gamma_n,0) = 0$ gives $\ell(\gamma_n,\theta) \le R(\gamma_n)^2 \cdot \theta$ for all $n$.
Hence $L \le \lim_{n \to \infty} R(\gamma_n)^2 \cdot \theta < \lim_{n \to \infty} R(\gamma_n)^2 \cdot B(\gamma) = A(\gamma)$.

We have shown that $0 < L < A(\gamma)$, so $\gamma$ inscribes a $\theta$-rectangle, by Lemma \ref{lem:rect}.
\end{proof}

For the proof of \Cref{thm:mainthm}, we record one more property of the spectral invariants:

\begin{itemize}
\item {\bf Derivative Property:}
for an open dense subset of smooth Jordan curves $\gamma$, there exists a set $\Theta(\gamma) \subset (0,\pi)$ of full measure such that every $\theta \in \Theta(\gamma)$ is a point of differentiability of $\ell(\gamma,\cdot)$, and there exists an inscribed $\theta$-rectangle $P$ in $\gamma$ such that $\ell'(\gamma,\theta) = R(P)^2$.
\end{itemize}

\noindent
Here $R(P)$ denotes the radius (half of the diameter) of $P$.
The Derivative Property is stated more precisely below in \Cref{thm:deriv1}.

\begin{proof}[Proof of \Cref{thm:mainthm}]
Suppose that $\gamma \subset \bC$ is an arbitrary Jordan curve, and approximate it in $C^0$ by smooth Jordan curves $\gamma_n$ as before.
We may assume that each $\gamma_n$ belongs to the open dense set of Jordan curves to which the Derivative Property applies.

Fix a value $\epsilon > 0$, and let $S(\epsilon,\gamma_n) \subset \Theta(\gamma_n)$ denote the (measurable) subset of angles $\theta$ such that $\ell'(\gamma_n,\theta) \ge \epsilon$.
Because $\Theta(\gamma_n)$ is dense in $(0,\pi)$, it follows that at almost every point $\theta$ of the complement $S(\epsilon,\gamma_n)^c \subset (0,\pi)$, the derivative $\ell'(\gamma_n,\theta)$ is defined and satisfies $\ell'(\gamma_n,\theta) < \epsilon$.

Lipschitz Continuity implies that the function $\ell(\gamma_n,\theta)$ is absolutely continuous, so its derivative is defined almost everywhere and obeys the Fundamental Theorem of Calculus:
\[
A(\gamma_n) = \ell(\gamma_n,\pi) - \ell(\gamma_n,0) = \int_0^\pi \ell'(\gamma_n,\theta) d \theta.
\]
We break the integral into the integral over $S(\epsilon,\gamma_n)$ and its complement $S(\epsilon,\gamma_n)^c$.
We bound $\ell'(\gamma_n,\theta)$ from above by $R(\gamma_n)^2$ on $S(\epsilon,\gamma_n)$  using Lipschitz Continuity.
We bound $\ell'(\gamma_n,\theta)$ from above by $\epsilon$ almost everywhere on $S(\epsilon,\gamma_n)^c$, and we bound the measure of $S(\epsilon,\gamma_n)^c$ from above by $\pi$.
Thus,
\[
A(\gamma_n) \le R(\gamma_n)^2 \cdot \mu(S(\epsilon,\gamma_n)) + \epsilon \cdot \pi.
\]
Rearranging gives $\mu(S(\epsilon,\gamma_n)) \ge (A(\gamma_n) - \pi \epsilon)/R(\gamma_n)^2$.

Let $\cS(\epsilon,\gamma) \subset (0,\pi)$ consist of those angles contained in infinitely many of the sets $S(\epsilon,\gamma_n)$.
By the elementary measure theoretic Lemma \ref{lem:measure} below, it has measure $\mu(\cS(\epsilon,\gamma)) \ge (A(\gamma) - \pi \epsilon)/R(\gamma)^2$.

Select any angle $\theta \in \cS(\epsilon,\gamma)$.
We claim that there is a $\theta$-rectangle $P$ inscribed in $\gamma$ with $R(P)^2 \ge \epsilon$.
By construction, there exists an infinite sequence $n_k = n_k(\theta)$ such that $\ell'(\gamma_{n_k},\theta) \ge \epsilon$ for all $k \ge 1$.
By the Derivative Property, there exists an inscribed $\theta$-rectangle $P_k$ in $\gamma_{n_k}$ for all $k \ge 1$, each satisfying $R(P_k)^2 \ge \epsilon$.
By compactness, the $P_k$ possess a convergent subsequence contained in $\gamma$.
Its limit is the desired $\theta$-rectangle $P$.

Lastly, define $\cS(\gamma) = \bigcup_{\epsilon > 0} \cS(\epsilon,\gamma)$.
Then $\mu(\cS(\gamma)) \ge B(\gamma)$, and $\gamma$ inscribes a $\theta$-rectangle, for all $\theta \in \cS(\gamma)$.
\end{proof}

The proof of \Cref{thm:mainthm} establishes the following result:

\begin{cor}
If $\gamma \subset \bC$ is a Jordan curve and $\epsilon > 0$, then there exists a subset $\cS(\epsilon,\gamma) \subset (0, \pi)$ of measure at least $(A(\gamma) - \pi \epsilon) / R(\gamma)^2$ such that for all $\theta \in \cS(\epsilon,\gamma)$, $\gamma$ inscribes a $\theta$-rectangle $P$ with $R(P)^2 \ge \epsilon$. \hfill \qed
\end{cor}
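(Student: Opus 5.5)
The corollary is read off from the proof of \Cref{thm:mainthm}: we stop before taking the union over $\epsilon$ and keep track of the size of the inscribed rectangles. Fix a Jordan curve $\gamma$ and $\epsilon>0$. Approximate $\gamma$ in $C^0$ by smooth Jordan curves $\gamma_n$, each enclosing area $A(\gamma)$ and each in the open dense set where the Derivative Property holds. Let $S(\epsilon,\gamma_n)\subset\Theta(\gamma_n)$ be the set of $\theta$ with $\ell'(\gamma_n,\theta)\ge\epsilon$.

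First we bound the measure of $S(\epsilon,\gamma_n)$. Lipschitz Continuity makes $\ell(\gamma_n,\cdot)$ absolutely continuous, so the Fundamental Theorem of Calculus gives $A(\gamma_n)=\int_0^\pi \ell'(\gamma_n,\theta)\,d\theta$. Split the integral over $S(\epsilon,\gamma_n)$ and its complement. On $S(\epsilon,\gamma_n)$, Lipschitz Continuity gives $\ell'\le R(\gamma_n)^2$. On the complement, $\ell'<\epsilon$ almost everywhere, since $\Theta(\gamma_n)$ has full measure. Hence
\[
\mu(S(\epsilon,\gamma_n))\ge \frac{A(\gamma_n)-\pi\epsilon}{R(\gamma_n)^2}.
\]

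Next we pass to the limit. Define $\cS(\epsilon,\gamma)=\limsup_n S(\epsilon,\gamma_n)$, the set of angles lying in infinitely many $S(\epsilon,\gamma_n)$. Since everything sits in the finite-measure space $(0,\pi)$, reverse Fatou gives $\mu(\limsup_n S_n)\ge\limsup_n\mu(S_n)$; this is the content of Lemma \ref{lem:measure}. Because $R(\gamma_n)\to R(\gamma)$ under $C^0$ convergence and $A(\gamma_n)=A(\gamma)$, we get $\mu(\cS(\epsilon,\gamma))\ge (A(\gamma)-\pi\epsilon)/R(\gamma)^2$.

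Finally we produce the rectangles. For $\theta\in\cS(\epsilon,\gamma)$, pick $n_k$ with $\theta\in S(\epsilon,\gamma_{n_k})$. The Derivative Property gives $\theta$-rectangles $P_k$ inscribed in $\gamma_{n_k}$ with $R(P_k)^2=\ell'(\gamma_{n_k},\theta)\ge\epsilon$. The vertex quadruples lie in a compact set, so a subsequence converges. The limit is inscribed in $\gamma$ and is a $\theta$-rectangle, or possibly a degenerate point.

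The main point is ruling out degeneration, and here the uniform bound $R(P_k)^2\ge\epsilon$ does the work. Radius is continuous under convergence of vertices, so $R(P)^2\ge\epsilon>0$ and $P$ is not a single point. The angle between the diagonals passes to the limit because the vertices stay uniformly separated. So $P$ is a genuine inscribed $\theta$-rectangle with $R(P)^2\ge\epsilon$, and no rectifiability hypothesis (as in Lemma \ref{lem:rect}) is needed.
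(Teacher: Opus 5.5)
Your proposal is correct and is essentially the paper's own argument: the corollary is what the proof of \Cref{thm:mainthm} already establishes before the union over $\epsilon$ is taken, using the same Lipschitz/FTC bound on $\mu(S(\epsilon,\gamma_n))$, the same passage to the set of angles lying in infinitely many $S(\epsilon,\gamma_n)$, and the same compactness step, where the uniform bound $R(P_k)^2 \ge \epsilon$ prevents the limit from degenerating. Your inequality $\mu(\limsup_n S_n) \ge \limsup_n \mu(S_n)$, which follows from continuity from above in a finite measure space, is simply a shorter proof of Lemma \ref{lem:measure}, and justifying $\ell' < \epsilon$ a.e.\ off $S(\epsilon,\gamma_n)$ by the fact that $\Theta(\gamma_n)$ has full measure is the right reason for that step.
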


\begin{scholium}
If $\gamma \subset \bC$ is a Jordan curve and $k$ is a positive integer, then $\mu(\cS(\gamma) \cap [0,\pi/k]) \ge B(\gamma)/k$.
\end{scholium}

\begin{proof}
Repeat the proof of \Cref{thm:mainthm}, working over the interval $[0,\pi/k]$ and using the inequality $\ell(\gamma_n,\pi/k) \ge A(\gamma_n)/k$ established in the course of the proof of \Cref{thm:otherthm}.
\end{proof}

Lastly, we state and prove the elementary measure theoretic result used in the proof of \Cref{thm:mainthm}:

\begin{lem}
	\label{lem:measure}
	Suppose that $(X,\mu)$ is a finite measure space, $S_1,S_2,\dots$ is a sequence of measurable subsets of $X$, and $\liminf_{n \to \infty} \mu(S_n) = \mu_0$. 
	Let $S \subseteq X$ be the set of points contained in infinitely many $S_n$.
	Then $S$ is measurable, and $\mu(S) \ge \mu_0$.
\end{lem}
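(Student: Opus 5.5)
The set $S$ is the limit superior of the sets $S_n$. I will write it as
\[
S = \limsup_{n\to\infty} S_n = \bigcap_{N \ge 1} \bigcup_{n \ge N} S_n .
\]
This is a countable intersection of countable unions of measurable sets, so $S$ is measurable. The identity itself is immediate: a point lies in infinitely many $S_n$ exactly when, for every $N$, it lies in some $S_n$ with $n \ge N$.

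For the measure bound, I will set $T_N = \bigcup_{n \ge N} S_n$. These sets decrease in $N$, and $S = \bigcap_N T_N$. Since $\mu(X) < \infty$, continuity of measure from above applies and gives $\mu(S) = \lim_{N\to\infty} \mu(T_N)$. Finiteness of $\mu$ is needed only at this step.

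Next, $T_N \supseteq S_n$ for every $n \ge N$, so $\mu(T_N) \ge \sup_{n \ge N} \mu(S_n) \ge \inf_{n \ge N} \mu(S_n)$. Letting $N \to \infty$ yields
\[
\mu(S) \ge \lim_{N\to\infty} \inf_{n\ge N}\mu(S_n) = \liminf_{n\to\infty} \mu(S_n) = \mu_0 .
\]
In fact this gives the stronger bound $\mu(S) \ge \limsup_n \mu(S_n)$, which is a reverse Fatou inequality for indicator functions.

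None of these steps is a real obstacle. The only point to watch is that continuity from above genuinely uses $\mu(X) < \infty$, or at least $\mu(T_1) < \infty$.

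When this lemma is applied in the proof of \Cref{thm:mainthm}, $X = (0,\pi)$ and $S_n = S(\epsilon,\gamma_n)$. The hypothesis there is only a lower bound on $\liminf \mu(S_n)$, namely $\liminf_n (A(\gamma_n)-\pi\epsilon)/R(\gamma_n)^2 = (A(\gamma)-\pi\epsilon)/R(\gamma)^2$, since $A(\gamma_n)=A(\gamma)$ and $R(\gamma_n)\to R(\gamma)$ under $C^0$ convergence. The argument above uses the liminf only as a lower bound, so it applies unchanged.
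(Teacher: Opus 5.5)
Your proof is correct, but it takes a different route from the paper's.

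You use the standard representation $S=\bigcap_{N}\bigcup_{n\ge N}S_n$ and apply continuity from above to the tails $\bigcup_{n\ge N}S_n$. The paper instead filters $S$ by multiplicity. It lets $T_k$ be the set of points lying in at least $k$ of the $S_n$, and proves $T_k$ measurable by writing $T_k^c$ as a countable union, over finite index sets $U$ with $|U|\le k-1$, of $\bigcap_{i\in U}S_i\setminus\bigcup_{j\notin U}S_j$. It then gets $\mu(T_k)\ge\mu_0$ by a counting argument. Each point lies in at most $k-1$ of the sets $S_n\cap T_k^c$, so $\sum_n\mu(S_n\cap T_k^c)\le (k-1)\mu(X)<\infty$, which forces $\mu(S_n\cap T_k^c)\to 0$. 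Continuity from above along $T_1\supseteq T_2\supseteq\cdots$ with $S=\bigcap_k T_k$ finishes the proof.

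Your version has several advantages. It is shorter and gets measurability for free. It uses finiteness of $\mu$ only once, for continuity from above, whereas the paper also needs it for the summability bound. It also directly gives the stronger reverse-Fatou bound $\mu(S)\ge\limsup_n\mu(S_n)$. The paper's argument implicitly yields this too, since $\mu(S_n\cap T_k)-\mu(S_n)\to 0$, but the multiplicity filtration gives no stronger conclusion than yours.

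Your remark on the application is also fine. The lemma only needs a lower bound on $\liminf_n\mu(S_n)$, and $A(\gamma_n)=A(\gamma)$, $R(\gamma_n)\to R(\gamma)$ supply it.
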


\begin{proof}
	For each $k \geq 1$, let $T_k$ denote the set of points contained in at least $k$ of the subsets $S_n$.
	
	First we show that $T_k$ is measurable, for all $k$.
	Its complement $T_k^c$ is the set of points contained in at most $k-1$ of the subsets $S_n$.
	Thus $T_k^c$ is the countable union, over all subsets $U \subset \bZ^+$ with $|U| \le k-1$, of $\bigcap_{i \in U} S_i \setminus \bigcup_{j \in U^c} S_j$.
	It follows that $T_k$ is in the $\sigma$-subalgebra generated by the $S_n$, so it is measurable.
	
	Next we show that $\mu(T_k) \ge \mu_0$, for all $k$.
	A point in $X$ belongs to at most $k-1$ of the sets $S_n \cap T_k^c$, $n \ge 1$.
	It follows that $\sum_{n=1}^\infty \mu(S_n \cap T_k^c) \le (k-1) \cdot \mu(X) < \infty$.
	Hence $\lim_{n \to \infty} \mu(S_n \cap T_k^c) = 0$, so $\liminf_{n \to \infty} \mu(S_n \cap T_k) = \liminf_{n \to \infty} \mu(S_n) = \mu_0$, so $\mu(T_k) \ge \mu_0$.
	
	Finally, we have $T_1 \supseteq T_2 \supseteq \cdots$ and $S = \bigcap_{k \ge 1} T_k$.
	We conclude that $S$ is measurable, and $\mu(S) \ge \mu_0$, as desired.
\end{proof}

%%%
%%%
%%%

\section{Genericity}
\label{sec:genericity}

The purpose of this section is to make precise the result that for an open dense subset of smooth Jordan curves, the space of inscribed rectangles and binormals has the structure of a smooth, closed 1-dimensional manifold.
Its proof is given in the Appendix.
It is an analogue in the smooth setting of a result of Schwartz for PL curves \cite[Theorem 1.5]{schwartz2020}.
It is this type of Jordan curve to which the Derivative Property applies.
We begin by framing its context.

Equip $\bC^2$ with the standard symplectic form $\omega$.
Define the Hamiltonian function
\begin{eqnarray*}
H : \bC^2 &\longrightarrow& \bR, \\
(z,w) &\longmapsto& \frac14|z-w|^2.
\end{eqnarray*}
Let $X_H$ denote the Hamiltonian vector field, implicitly defined via $\omega(X_H,-)=dH$.
Its flow defines a Hamiltonian circle action.
For $\theta \in S^1 = [0,2\pi]/(0 \sim 2 \pi)$, its time-$\theta$ flow $\phi^H_\theta = \rho_\theta : \bC^2 \to \bC^2$ rotates through angle $\theta$ about the diagonal line $\Delta(\bC) \subset \bC^2$.
In even more concrete terms, given $(z,w) \in \bC^2$, rotate the line segment $\overline{zw}$ clockwise through angle $\theta$ around its midpoint; then $\rho_\theta(z,w) = (z',w')$ mark the endpoints of the rotated line segment.
Thus, if $\rho_\theta(z,w) = (z',w')$, then either $z=z'=w=w'$, or else $z,z',w,w'$ mark the vertices of a $\theta$-rectangle in clockwise order.

Assume that $\gamma \subset \bC$ is a smooth Jordan curve and $\theta \in S^1$.
Then $L_0 = \gamma \times \gamma$ and $L_1 := \rho_\theta(\gamma \times \gamma)$ are Lagrangian tori in $(\bC^2,\omega)$.
If $\theta \ne 0,\pi$ and $(z',w') = \rho_\theta(z,w)$ is a point of intersection of $L_0$ and $L_1$, then either $(z',w')$ belongs to the diagonal loop $\Delta(\gamma) = \{ (z,z) : z \in \gamma\}$, or else $(z,z',w,w')$ are the vertices of an inscribed $\theta$-rectangle in $\gamma$.
The construction of the Jordan Floer chain complex is a version of Lagrangian Floer homology adapted to the pair $(L_0,L_1)$, using intersection points and holomorphic strips which avoid the line $\Delta(\bC)$.

We parametrize all of the inscribed rectangles in $\gamma$ at once by forming the intersection of the pair of subspaces
\[
\cL_0 = L_0 \times S^1, \quad \cL_1 = \bigcup_{\theta \in S^1 \setminus \{0,\pi\}} \rho_\theta(L_0) \times \{ \theta \}
\]
of $\bC^2 \times S^1$.
The intersection $\cL_0 \cap \cL_1$ admits a natural compactification, as follows.
A {\em binormal} of $\gamma$ is an ordered pair $(z,w) \in L_0 \setminus \Delta(\gamma)$ such that $T_z \gamma$ and $T_w \gamma$ are orthogonal to the line segment $\overline{zw}$.
Informally, it corresponds to an inscribed $\theta$-rectangle in $\gamma$ in the limiting case $\theta \in \{0,\pi\}$.
As we argue below, if we restrict $H$ to $L_0$, then the critical points of $H | L_0$ consist of binormals of $\gamma$ and the loop $\Delta(\gamma)$ (Lemma \ref{lem:binormal}; we already observed this in \cite{greenelobb3}).
Moreover, $\Crit(H | L_0) \times \{0,\pi\}$ compactifies $\cL_0 \cap \cL_1$, and one component of the compactification is $\Delta(\gamma) \times S^1$ (Lemma \ref{lem:smooth2}).
Define
\[
\cR(\gamma) = \left( \cL_0 \cap \cL_1 \right) \,  \bigsqcup \, \left( \Crit(H | L_0) \times \{ 0 , \pi \} \right) \setminus \Delta(\gamma) \times S^1.
\]
Thus, $\cR(\gamma)$ parametrizes the inscribed rectangles and binormals in $\gamma$.

Finally, let $C^\infty(S^1,\bC)$ denote the space of smooth functions from $S^1$ to $\bC$, equipped with the Whitney $C^\infty$ topology.
Within it sits the open subspace $\Emb^\infty(S^1,\bC)$ of smooth embeddings, i.e. injective maps with nonvanishing derivative.
It is the space of parametrized smooth Jordan curves.

\begin{thm}[{\bf Genericity Theorem}]
\label{thm:genericity}
With respect to the $C^\infty$ topology on $\Emb^\infty(S^1,\bC)$, there exists an open dense subset $\Emb^\infty_\mathrm{reg}(S^1,\bC)$ such that if $f \in \Emb^\infty_\mathrm{reg}(S^1,\bC)$ and $\gamma = f(S^1)$, then the subspace $\cR(\gamma) \subset \gamma \times \gamma \times S^1$ is a smooth, closed 1-dimensional submanifold.
\end{thm}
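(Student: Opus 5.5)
We prove the Genericity Theorem by a parametric transversality argument (Sard--Smale) applied to a universal family over $\Emb^\infty(S^1,\bC)$. The inscribed-rectangle locus is cut out by equations in a space of dimension $3$ (two points on $S^1$ and an angle). The condition that a rotated pair lies back on $\gamma\times\gamma$ imposes two conditions, leaving dimension $1$. Near $\theta\in\{0,\pi\}$ the equations degenerate along the binormals, so we blow up there.

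\textbf{Step 1: the interior region $\theta\neq 0,\pi$.} Consider the map
\[
\Phi:\Emb^\infty(S^1,\bC)\times\left((S^1\times S^1\setminus\Delta)\times(S^1\setminus\{0,\pi\})\right)\to\bC^2\times\bC^2,\quad (f,s,t,\theta)\mapsto\left(\rho_\theta(f(s),f(t)),\,f\times f\right),
\]
or more conveniently the condition that $\rho_\theta(f(s),f(t))=(f(s'),f(t'))$ for some $s',t'$. We phrase it as the vanishing of
\[
(f,s,t,s',t',\theta)\mapsto \rho_\theta(f(s),f(t))-(f(s'),f(t'))\in\bC^2 .
\]
The domain is $5$-dimensional and the target $4$-dimensional. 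Off the diagonal the four points $f(s),f(t),f(s'),f(t')$ are distinct vertices of a genuine rectangle. Bump-function perturbations of $f$ supported near each of them move these points independently, so the universal map is a submersion onto $\bC^2$. Sard--Smale, applied on Banach completions ($C^k$) and then passed to $C^\infty$ by the standard Taubes trick, gives a residual set of $f$ for which the zero set is a $1$-manifold. This zero set is $\cL_0\cap\cL_1$ with its rectangle parametrization.

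\textbf{Step 2: near $\theta\in\{0,\pi\}$ and binormals.} Here the four points coalesce in pairs, and the above equation has the trivial solution family $s'=s,t'=t,\theta=0$ (and the swapped family at $\pi$). Divide out by $\theta$, using the relation $\frac{d}{d\theta}\rho_\theta=X_H$: write $\rho_\theta(p)-q$ with $q\in L_0$ in a Weinstein-type chart around $L_0$, where $L_1=\rho_\theta(L_0)$ is the graph of $d(\theta\, H|L_0+O(\theta^2))$. Intersections then correspond to zeros of $d(H|L_0)+O(\theta)$, so the rescaled equation extends smoothly to $\theta=0$ with zero set $\Crit(H|L_0)\times\{0\}$ there. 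This is consistent with Lemma~\ref{lem:binormal}. Genericity of $f$ that makes every binormal a nondegenerate critical point of $H|L_0$, again a transversality statement for the $2$-jet of $f$ at two distinct points, makes the extended zero set a manifold transverse to $\{\theta=0\}$. This shows that $\cR(\gamma)$ is a smooth $1$-manifold near its boundary strata, and in fact has no boundary there, because the branch continues into $\theta<0$, i.e.\ near $2\pi$. The same argument applies at $\pi$ after swapping $z,w$.

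\textbf{Step 3: compactness and openness.} Away from the diagonal component, $\cR(\gamma)$ is closed in the compact space $\gamma\times\gamma\times S^1$. The main obstacle is showing that rectangle branches cannot limit onto $\Delta(\gamma)\times S^1$, so that $\cR(\gamma)$ is compact and separated from the removed component; this is the content of Lemma~\ref{lem:smooth2}. There we use that a smooth curve near a point is a graph, so a small inscribed $\theta$-rectangle with $\theta$ bounded away from $0,\pi$ cannot exist. The case where $\theta$ tends to $0$ or $\pi$ simultaneously requires a local quantitative estimate via curvature, which we try to handle by the same rescaling as in Step 2. Once compactness holds, transversality is an open condition over a compact set in the $C^2$ topology, hence open in $C^\infty$. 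Together with density from Steps 1--2, this yields the open dense set $\Emb^\infty_\mathrm{reg}(S^1,\bC)$.
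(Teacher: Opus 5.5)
Your architecture matches the paper's: Sard--Smale for transversality of the rectangle locus when $\theta\neq 0,\pi$, the rescaled generating function $F_t=S_t/t$ with $F_0=H|L_0$ near $\theta\in\{0,\pi\}$, and generic Morse nondegeneracy at binormals. There is, however, a genuine gap exactly where you flag the ``main obstacle'': you never show that branches of $\cR(\gamma)$ cannot accumulate on $\Delta(\gamma)\times S^1$ as $\theta\to 0$ or $\pi$. Step 2 only controls neighborhoods of the \emph{off-diagonal} critical points of $F_0$. The zero set of $dF_t$ also contains the 2-dimensional component $\Delta(\gamma)\times(-\epsilon,\epsilon)$, and nothing you prove prevents 1-dimensional branches from limiting onto a point $(z,z,0)$. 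Without this, $\cR(\gamma)$ need not be closed. Your Step 3 openness argument, which needs a compact region, also cannot start.

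The missing ingredient is the paper's Lemma \ref{lem:clean2}. For \emph{every} $C^2$ embedding, $H|L_0$ is Morse--Bott along $\Delta(\gamma)$, because the Hessian of $\frac14|f(z_1)-f(z_2)|^2$ at $(z,z)$ is $\frac12|f'(z)|^2\left(\begin{smallmatrix}1&-1\\-1&1\end{smallmatrix}\right)$. This is nondegenerate normal to the diagonal and uses only $f'\neq 0$, with no curvature estimate and no genericity. Feed this into your own rescaling. Since $dF_t$ vanishes along the diagonal for all $t$, the tangent space of $X=\bigcup_t\graph(dF_t)\times\{t\}$ at $(z,z,0)$ meets that of $L_0\times(-\epsilon,\epsilon)$ exactly in $T\Delta(\gamma)\oplus\bR$. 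For $t\neq 0$, cleanliness is Lemma \ref{lem:cleanlocus}. A clean component is locally the entire intersection, so by compactness of $\Delta(\gamma)$ no other branch comes near it.

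Alternatively, there is an elementary fix: your graph argument handles all $\theta$ at once if you apply it to the \emph{sides} of the rectangle rather than its diagonals. There is $r>0$ such that for every $z_0\in\gamma$, the set $\gamma\cap B(z_0,r)$ is a graph over $T_{z_0}\gamma$ with slope of absolute value less than $1$. A rectangle with all four vertices on such a graph would have two perpendicular adjacent sides, each of slope less than $1$ in absolute value, which is impossible. Likewise, no binormal has length less than $r$. Hence every point of $\cR(\gamma)$ has $|z-w|\ge r$, uniformly in $\theta$. With either fix, your Steps 1--3 go through.
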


%%%
%%%
%%%

\section{The Derivative of the Action}
\label{sec:action}

Suppose that
\begin{itemize}
\item
$(M,\omega)$ is a symplectic manifold,
\item
$L \subset M$ is a Lagrangian submanifold,
\item
$H : M \longrightarrow \bR$ is a Hamiltonian,
\item
the time-$s$ flow $\phi^H_s$ of the Hamiltonian vector field $X_H$ is defined for a value $s \in \bR$, and
\item
$p_s \in L \cap \phi^H_s(L)$ is a point of intersection.
\end{itemize}
Under favorable hypotheses, this point has a well-defined symplectic action $\cA(p_s) \in \bR$.
If $s \in I$ varies in an open interval and $p_s \in L \cap \phi^H_s(L)$ varies smoothly in $s$, then we can study the smooth function $I \longrightarrow \bR$, $s \longmapsto \cA(p_s)$.
The result of this section equates the derivative of $\cA(p_s)$ at time $s=T$ with the value of $H(p_T)$ (Proposition \ref{prop:deriv}).
We suspect this result is well-known, although we could not locate it in the literature.
We proceed to fill in the required hypotheses before formally stating the result.

Write $q_s \in L$ for the point such that $p_s = \phi^H_s(q_s)$.
There is a corresponding trajectory $\tau_s : [0,s] \to M$ of $\phi^H_s$, i.e.~$\tau_s(0) = q_s, \tau_s(1) = p_s$, and $(d/dt) \tau_s(t) = X_H \circ \tau_s(t)$, for all $0 \le t \le s$.
Suppose that $\pi_1(M,L) = \{ 1 \}$.
Then there exists a {\em capping} of $\tau_s$, i.e.~a homotopy rel endpoints $\widehat{\tau}_s : [0,s] \times [0,1] \to M$ from $\widehat{\tau}_s(t,0) = \tau_s(t)$ to a path $\widehat{\tau}_s(t,1)$ whose image is contained in $L$.
The homotopy class of $\widehat{\tau}_s$ is unique if $\pi_2(M,L) = 0$, as well as in the setting of Jordan Floer homology, in which we impose the additional constraint that $\widehat{\tau}_s$ is disjoint from the divisor $\Delta(\bC)$ \cite[Lemma 2.5]{greenelobb3}.
Assuming this is the case, the symplectic action of $p_s$ is the well-defined real number given by
	\begin{align*}
		\cA(p_s) &= \int_0^s H(\tau_s(t)) dt - \int_{[0,s]\times[0,1]} \widehat{\tau}_s^* \omega.
	\end{align*}
Now suppose that $I \subset \bR$ is an open interval, $\phi^H_s : M \longrightarrow M$ is defined for all $s \in I$, and
\[
p : I \longrightarrow M, \quad s \longmapsto p_s \in \phi^H_s(L) \cap L,
\]
is a smooth path of intersection points.

\begin{prop}
	\label{prop:deriv}
	Under the preceding hypotheses, the derivative of $\cA(p_s)$ at $s=T$ equals $H(p_T)$.
\end{prop}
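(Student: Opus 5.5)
The plan is to compare the actions $\cA(p_s)$ and $\cA(p_T)$ directly. I will build a capping of $\tau_s$ out of a capping of $\tau_T$ together with the strip swept out by the trajectories $\tau_{s'}$ for $s'$ between $T$ and $s$. Uniqueness of the capping class will then let me express $\cA(p_s)-\cA(p_T)$ as an explicit integral, which I differentiate at $s=T$.

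\emph{Step 1 (the swept surface).} Set $q_s = \phi^H_{-s}(p_s)$. This is a smooth path in $L$, since $p_s$ is smooth and $p_s \in \phi^H_s(L)$. Define
\[
u(\sigma,t) = \phi^H_t(q_\sigma), \qquad (\sigma,t) \in D_s = \{ (\sigma,t) : \sigma \text{ between } T \text{ and } s,\ 0 \le t \le \sigma \}.
\]
Then $u(\sigma,\cdot)$ restricted to $[0,\sigma]$ is the trajectory $\tau_\sigma$. On the edge $t=0$ of $D_s$ the map $u$ traces the path $q_\sigma \subset L$. On the slanted edge $t=\sigma$ it traces the path $p_\sigma \subset L$. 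The remaining edges are the trajectories $\tau_T$ and $\tau_s$.

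Gluing $u|_{D_s}$ to $\widehat{\tau}_T$ produces a homotopy rel endpoints from $\tau_s$ to a path in $L$: namely the concatenation of the short path $q_\sigma$, the boundary path $\widehat{\tau}_T(\cdot,1)$ in $L$, and the short path $p_\sigma$. After reparametrization this is a capping of $\tau_s$.

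For the Jordan Floer setting I must check that this capping avoids $\Delta(\bC)$. This holds because $\Delta(\bC)$ is invariant under the flow $\phi^H$ and $p_T \notin \Delta(\bC)$, so for $s$ near $T$ the whole strip $u(D_s)$ stays off $\Delta(\bC)$. By the uniqueness of the capping class (\cite[Lemma 2.5]{greenelobb3}, or $\pi_2(M,L)=0$), I may therefore use this glued capping to compute $\cA(p_s)$. This gives
\[
\cA(p_s)-\cA(p_T) = \int_0^s H(\tau_s)\,dt - \int_0^T H(\tau_T)\,dt \mp \int_{D_s} u^*\omega .
\]

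\emph{Step 2 (computing the area).} Since $\partial_t u = X_H(u)$ and $\omega(X_H,-) = dH$, we have
\[
u^*\omega = \omega(\partial_\sigma u,\partial_t u)\, d\sigma\wedge dt = -\,dH(\partial_\sigma u)\, d\sigma \wedge dt .
\]
Now
\[
\int_0^\sigma dH(\partial_\sigma u)\,dt = \frac{d}{d\sigma}\int_0^\sigma H(u(\sigma,t))\,dt - H(p_\sigma),
\]
because $H(u(\sigma,\sigma)) = H(p_\sigma)$. Integrating in $\sigma$ from $T$ to $s$, the area term equals, up to the sign to be fixed in Step 3, the difference of the two $H$-integrals minus $\int_T^s H(p_\sigma)\,d\sigma$. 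The $H$-integrals cancel against those in the action difference, leaving
\[
\cA(p_s)-\cA(p_T) = \int_T^s H(p_\sigma)\,d\sigma .
\]
Differentiating at $s=T$ gives $H(p_T)$.

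\emph{Step 3 (main obstacle).} The main obstacle is bookkeeping of orientations. I must match the orientation of $D_s$ induced by the gluing with the convention in the definition of $\cA$, in which the capping is parametrized by $[0,s]\times[0,1]$ with $\widehat{\tau}_s(t,0)=\tau_s(t)$. I must also check that the boundary pieces lying in $L$ contribute nothing to the symplectic area, which holds because $L$ is Lagrangian. The natural consistency check is that the $\frac{d}{d\sigma}\int H$ terms must cancel exactly; this forces the sign, so the correct sign is the one that produces this cancellation. I would first verify that sign on the model case $M=\bC$, $H=\tfrac12|z|^2$, $L=\bR$, where both sides can be computed by hand.
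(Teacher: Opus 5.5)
Your Steps 1 and 2 reproduce the paper's first proof. The trapezoid swept out by the chords $\tau_\sigma$ is glued to a capping of $\tau_T$, the identity $\omega(X_H,\cdot)=dH$ turns its area into an integral of $\partial_\sigma H(u)$, and the Leibniz rule leaves $\int_T^s H(p_\sigma)\,d\sigma$. Your check that the strip misses $\Delta(\bC)$ is a correct detail the paper leaves implicit: the flow fixes $\Delta(\bC)$ pointwise, so $u(\sigma,t)\in\Delta(\bC)$ iff $p_\sigma\in\Delta(\bC)$.

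The one real problem is Step 3. The sign with which the strip enters is dictated by the definition of $\cA$ and the orientation of the glued capping. Choosing it because it produces the cancellation you want assumes the conclusion. Moreover, the two requirements you list in Step 3 are incompatible. Use the formula exactly as written: $\widehat{\tau}_s(t,0)=\tau_s(t)$, $\widehat{\tau}_s(t,1)\in L$, orientation $dt\wedge dr$. For $s>T$ the capping coordinate $r$ increases as $\sigma$ decreases inside the strip. So the strip adds $\int_{D_s}\omega(\partial_\sigma u,\partial_t u)\,d\sigma\,dt=-\int_T^s\int_0^\sigma dH(\partial_\sigma u)\,dt\,d\sigma$ to the capping area, and the $H$-integrals double rather than cancel:
\[
\cA(p_s)-\cA(p_T)=2\left(\int_0^s H(\tau_s(t))\,dt-\int_0^T H(\tau_T(t))\,dt\right)-\int_T^s H(p_\sigma)\,d\sigma .
\]
Equivalently, with that convention the critical points of the action functional are chords of $-X_H$, not of $X_H$.

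The proposition holds for the opposite convention, in which the capping runs from a path in $L$ at $r=0$ to the chord at $r=1$. Then chords of $X_H$ are critical points of the action. The paper's argument uses this convention silently when it writes $\int_{\tau(D)}\omega=\int_D\omega(X_H,\partial_s\tau_s(t))\,ds\,dt$. It also uses it in the second proof, when it asserts that $\sigma_T$ is a critical point of $\cA(\cdot,TH)$. So the repair is to fix this convention explicitly and then verify the orientation of the glued disk, not to read the sign off the desired answer.

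Your proposed test case cannot settle the sign. For $L=\bR$ and $H=\frac12|z|^2$ one has $L\cap\phi^H_s(L)=\{0\}$ for $s\notin\pi\bZ$, where $H$ and every action vanish. Take instead $L=\{\mathrm{Re}\,z=1\}$ with the same $H$. Then $p_s=1-i\tan(s/2)$ and $H(p_s)=\frac12\sec^2(s/2)$. The two conventions give $\cA(p_s)=\tan(s/2)$ and $\cA(p_s)=s\sec^2(s/2)-\tan(s/2)$ respectively, and only the first has derivative $H(p_s)$.
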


As remarked, we could not locate a proof in the literature, so for certainty, we give two.

\begin{proof}[Proof 1.]
	Write $D$ for the polygonal region of the $(s,t)$ plane with vertices
	\[ (T,0), (T + \delta,0), (T+\delta,T + \delta), (T,T) {\rm .} \]
	Define $\tau : D \longrightarrow M$ by $(s,t) \longmapsto \tau_s(t)$.
	We obtain a capping of $\tau_{T+\delta}$ by concatenating a capping of $\tau_T$ with $\tau$.
	Then we have that
	\begin{align*}
	\cA(p_{T + \delta}) - \cA(p_T) &= \int_0^{T+ \delta} H(\tau_{T + \delta}(t)) dt - \int_0^{T} H(\tau_{T}(t)) dt - \int_{\tau(D)} \omega  \\
	&= \int_0^{T+ \delta} H(\tau_{T + \delta}(t)) dt - \int_0^{T} H(\tau_{T}(t)) dt - \int_D \omega(X_H, \partial_s \tau_s(t)) dsdt  \\
	&= \int_0^{T+ \delta} H(\tau_{T + \delta}(t)) dt - \int_0^{T} H(\tau_{T}(t)) dt - \int_D \partial_s H(\tau_s(t)) dsdt  \\
	&= \int_{T}^{T + \delta}  H(\tau_t(t)) dt {\rm .}
	\end{align*}
	Dividing through by $\delta$ and taking $\delta \rightarrow 0$ gives
	\[ \left. \frac{d}{ds} \cA(p_s)\right|_{s=T} = H(\tau_T(T)) = H(p_T), \]
	as required.
\end{proof}

\begin{proof}[Proof 2.]
For any smooth path $x : [0,1] \to M$ with $x(0), x(1) \in L$, we can find a capping $\widehat{x} : [0,1] \times [0,1] \to M$ and define the symplectic action of $x$ with respect to the Hamiltonian $s H$ by the formula
\[
\cA(x,sH) = \int_0^1 sH \circ x(t) \, dt - \int_{[0,1]\times[0,1]} x^* \omega.
\]
As before, it is independent of $\widehat{x}$ under the operating assumptions (provided $x$ is disjoint from $\Delta(\bC)$, in the setting of Jordan Floer homology).
A trajectory $\tau_s : [0,s] \to M$ of $\phi^H_s$ corresponds to a trajectory $\sigma_s :[0,1] \to M$ of $\phi^{s H}_1$ by a linear reparametrization: $\sigma_s(t) = \tau_s(t/s)$.
The trajectory $\sigma_T$ is a critical point of $\cA(x,TH)$.
In particular, taking the smooth family of paths $\sigma_s$ passing through $\sigma_T$, we have
\begin{equation}
\label{e:criticalpoint}
\left. \frac{d}{ds} \cA(\sigma_s,TH) \right|_{s=T} = 0.
\end{equation}
On the other hand,
\[
\cA(\sigma_s,sH) = \cA(\sigma_s,TH) + (s-T) \int_0^1 H \circ \sigma_s(t) dt.
\]
The left side matches the action $\cA(p_s)$ in the earlier notation, and the integrand matches $H(p_s)$.
Hence 
\[
\cA(p_s) = \cA(\sigma_s,TH) + (s-T) H(p_s).
\]
Taking the derivative at $s = T$, using \eqref{e:criticalpoint} for the first term and the product rule for the second, returns the desired result.
\end{proof}

%%%
%%%
%%%

\section{The Derivative Property}
\label{sec:derivative}

This section combines the Genericity Theorem for smooth Jordan curves of \Cref{sec:genericity} and the derivative property for the action of \Cref{sec:action} to prove the Derivative Property of the spectral invariants.
For a smooth Jordan curve $\gamma \subset \bC$, recall the space of inscribed rectangles and binormals $\cR(\gamma)$.
Recall as well \Cref{thm:genericity}, which asserts that the space $\Emb^\infty_\mathrm{reg}(S^1,\bC)$ of parametrized Jordan curves for which $\cR(\gamma)$ is a smooth manifold forms an open dense subset of $\Emb^\infty(S^1,\bC)$.

\begin{thm}[{\bf Derivative Property}]
\label{thm:deriv1}
Let $f \in \Emb^\infty_\mathrm{reg}(S^1,\bC)$ and let $\gamma = f(S^1) \subset \bC$.
Then there exists a subset $\Theta(\gamma) \subset (0,\pi)$ of full measure such that every $\theta \in \Theta(\gamma)$ is a point of differentiability of $\ell(\gamma,\cdot)$, and there exists an inscribed $\theta$-rectangle $P$ in $\gamma$ such that $\ell'(\gamma,\theta) = R(P)^2$.
\end{thm}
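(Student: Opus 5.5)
The plan is to combine Spectrality, the Genericity Theorem (\Cref{thm:genericity}) and \Cref{prop:deriv}. The key point is that, for a regular curve, the set of possible action values is a finite union of smooth curves in the $(\theta,\text{action})$-plane. The spectral invariant is a Lipschitz function whose graph lies in this union. At a generic $\theta$ its derivative must therefore equal the slope of one of these curves, and \Cref{prop:deriv} identifies that slope as $H$, i.e.\ as $R(P)^2$.

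\textbf{Step 1: local parametrization of the rectangles.} Since $f$ is regular, $\cR(\gamma)$ is a smooth closed $1$-manifold, hence a finite union of circles. Consider the projection $\pi : \cR(\gamma) \to S^1$ to the $\theta$-coordinate. Restricted to the part over $(0,\pi)$, it is a smooth map from a $1$-manifold. By Sard's theorem its critical values $C \subset (0,\pi)$ have measure zero. I would also like $C$ to be closed, so that the complement is open. Compactness of $\cR(\gamma)$ should give this, because critical points form a closed subset of the compact manifold. The delicate spots are near $\theta \in \{0,\pi\}$, but these lie outside $(0,\pi)$.

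Over each regular value $\theta_0 \notin C$, the fibre $\pi^{-1}(\theta_0)$ is finite by compactness. Near $\theta_0$, $\cR(\gamma)$ is a finite union of smooth sections $\theta \mapsto (z_i(\theta),w_i(\theta),\theta)$, $i=1,\dots,m$. Each section yields a smooth path of intersection points $p^i_\theta \in L_0 \cap \rho_\theta(L_0)$ avoiding $\Delta(\bC)$. \Cref{prop:deriv} applies, with the capping uniqueness from \cite[Lemma 2.5]{greenelobb3} making the action well defined. It gives that $a_i(\theta) := \cA(p^i_\theta)$ is smooth with $a_i'(\theta) = H(p^i_\theta) = \frac14|z_i'-w_i'|^2 = R(P_i(\theta))^2$. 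Here $P_i(\theta)$ is the corresponding rectangle, and $|z'-w'|$ is its diagonal, i.e.\ its diameter.

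\textbf{Step 2: comparing $\ell$ with the branches.} By Spectrality, $\ell(\gamma,\theta) \in \{a_1(\theta),\dots,a_m(\theta)\}$ for every $\theta$ near $\theta_0$. The function $\ell$ is Lipschitz, so it is differentiable almost everywhere. Let $\Theta(\gamma)$ be the set of $\theta \notin C$ at which $\ell$ is differentiable and which are not isolated points of any coincidence set $E_i=\{\theta : \ell(\gamma,\theta)=a_i(\theta)\}$. Isolated points of a subset of $\bR$ form a countable set, so $\Theta(\gamma)$ still has full measure. Take $\theta \in \Theta(\gamma)$ and pick $i$ with $\theta \in E_i$. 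Then $\theta$ is a limit of points $\theta_k \in E_i$, and both difference quotients along $\theta_k$ agree. This gives $\ell'(\gamma,\theta) = a_i'(\theta) = R(P_i(\theta))^2$, which is what \Cref{thm:deriv1} claims. Note that the $E_i$ are closed by continuity, so they are measurable.

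\textbf{Main obstacle.} I expect the hard part to be the justification in Step 1 that the local branches are globally well behaved, together with the action bookkeeping. This has two parts. First, the action $\cA$ on each branch must coincide with the filtration used to define $\ell$, including the normalization that puts $\ell$ in $[0,A(\gamma)]$. Second, the branches must be separated from $\Delta(\gamma)\times S^1$, so that no sequence of rectangles degenerates to a point at an interior $\theta$. For the second part I would try Lipschitz Continuity and Monotonicity first, to rule out degeneration away from $\theta \in \{0,\pi\}$. Failing that, I would use the compactification statement that $\Delta(\gamma)\times S^1$ is a separate component of the closure of $\cL_0\cap\cL_1$ (Lemma~\ref{lem:smooth2}). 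That lemma ensures the finiteness of fibres and the smooth local sections used above.
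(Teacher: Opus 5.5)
Your proposal is correct and is essentially the paper's proof: Sard plus compactness for the projection $\cR(\gamma) \to S^1$, finitely many smooth branches over an interval of regular values, \Cref{prop:deriv} on each branch, and Spectrality to match $\ell(\gamma,\cdot)$ with a single branch at a point of differentiability. Your ``main obstacle'' concerns are already handled by the compactness of $\cR(\gamma)$ asserted in \Cref{thm:genericity}, which you invoke in Step~1, and by Spectrality being stated for the same action $\cA$. The only difference is in Step~2. Your version discards isolated points of the coincidence sets $E_i$, which needs the remark that these sets form a countable family, e.g.\ by taking branches over each of the countably many component intervals of the regular-value set. The paper instead takes any sequence $\theta_i \to \theta$, fixes one branch $k$ by pigeonhole, and gets $\theta \in E_k$ by continuity, so $\Theta(\gamma)$ can simply be the set of regular values at which $\ell(\gamma,\cdot)$ is differentiable.
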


\begin{proof}
Consider the smooth map $p : \cR(\gamma) \longrightarrow S^1$ obtained by restricting the projection map $\gamma \times \gamma \times S^1 \longrightarrow S^1$ to $\cR(\gamma)$.
Let $\mathrm{Reg}(p) \subset S^1$ denote its set of regular values.
Because $\cR(\gamma)$ is smooth and compact, Sard's theorem guarantees that $\mathrm{Reg}(p)$ is open and dense in $S^1$.
Let $D(\gamma) \subset (0,\pi)$ denote the set of points at which $\ell(\gamma,\cdot)$ is differentiable.
By Lipschitz Continuity, $D(\gamma)$ has full measure.
Let $\Theta(\gamma) = \mathrm{Reg}(p) \cap D(\gamma)$.
It follows that $\Theta(\gamma) \subset (0,\pi)$ has full measure.

Fix $\theta \in \Theta(\gamma)$, and let $I$ denote the component of $\mathrm{Reg}(p)$ containing $\theta$.
Because $\mathrm{Reg}(p)$ is open, $I$ is an open interval, and $p^{-1}(I)$ consists of finitely many components $I_1,\dots,I_n \subset R(\gamma)$, each mapped diffeomorphically to $I$ by $p$.
For each angle $\psi \in I$ and index $k=1,\dots,n$, let $(z_k(\psi),w_k(\psi),\psi)$ denote the preimage of $\psi$ in $I_k$.
The restriction of $\cA$ to $I_k$ is a smooth, real-valued function, and its $\psi$-derivative at a point $(z,w,\psi) \in I_k$ is equal to $H(z,w) = \frac14|z-w|^2$ by Proposition \ref{prop:deriv}.

Pick a sequence of values $\theta_1, \theta_2, \dots \in I \setminus \{\theta\}$ limiting to $\theta$.
By Spectrality, for each $\theta_i$, there exists an index $k(i) \in \{1,\dots,n\}$ such that $\ell(\gamma,\theta_i) = \cA(z_{k(i)}(\theta_i),w_{k(i)}(\theta_i),\theta_i)$.
By passing to a subsequence, we may assume that there exists a single value $k \in \{1,\dots,n\}$ such that $k = k(i)$ for all $i$.
By continuity of both $\ell$ and $\cA$, we have
\[
\ell(\gamma,\theta) = \lim_{i \to \infty} \ell(\gamma,\theta_i) = \lim_{i \to \infty} \cA(z_k(\theta_i),w_k(\theta_i),\theta_i) = \cA(z_k(\theta),w_k(\theta),\theta).
\]
Moreover, because $\theta \in D(\gamma)$ is a point of differentiability of $\ell(\gamma,\cdot)$, we can compute
\begin{eqnarray*}
\ell'(\gamma,\theta) &=& \lim_{i \to \infty} \frac{\ell(\gamma,\theta_i) - \ell(\gamma,\theta)}{\theta_i-\theta} \\
&=&  \lim_{i \to \infty} \frac{\cA(z_k(\theta_i),w_k(\theta_i),\theta_i) - \cA(z_k(\theta),w_k(\theta),\theta)}{\theta_i-\theta} \\
&=& \frac{d}{d \theta} \cA(z_k(\theta),w_k(\theta),\theta) \\
&=& H(z_k(\theta),w_k(\theta)) \\
&=& R(P)^2,
\end{eqnarray*}
where $P$ is the $\theta$-rectangle inscribed in $\gamma$ with diagonals $\overline{z_k(\theta)w_k(\theta)}$ and $R_\theta(\overline{z_k(\theta)w_k(\theta)})$.
\end{proof}

%%%
%%%
%%%

\appendix

\section{Proof of the Genericity Theorem}

The proof of \Cref{thm:genericity} proceeds in two stages.
First, we study the space $\Emb^\infty_1$ of smooth embeddings for which $\cL_0$ and $\cL_1$ intersect transversely away from $\Delta(\gamma) \times S^1$.
These have the property that $\cR(\gamma)$ is a smooth manifold away from $\gamma \times \gamma \times \{0,\pi\}$.
Second, we study the space $\Emb^\infty_2$ of smooth embeddings for which $H | L_0$ is a Morse function away from $\Delta(\gamma)$.
These have the property that $\cR(\gamma)$ is a smooth manifold in a neighborhood of $\gamma \times \gamma \times \{0,\pi\}$.
We show that both $\Emb^\infty_1$ and $\Emb^\infty_2$ are open, dense subspaces of $\Emb^\infty(S^1,\bC)$, hence so is $\Emb^\infty_\mathrm{reg} = \Emb^\infty_1 \cap \Emb^\infty_2$.

\subsection{Rectangles.}
\label{sec:rectangles}

For the first step, we use a related picture for parametrizing inscribed rectangles in a Jordan curve in the spirit of Schnirelmann's work.
Given $\theta \in S^1 \setminus \{0,\pi\}$, form
\[
\graph(\rho_\theta) = \{ ((z,w), \rho_\theta(z,w)) : (z,w) \in \bC^2 \} \subset \bC^2 \times \bC^2.
\]
It is the space of quadruples $(z,w,z',w') \in \bC^4$ such that either $z=w=z'=w'$, or else $z,z',w,w'$ label the vertices of a $\theta$-rectangle in clockwise order, so that $\overline{zw}$ makes an angle $\theta$ with $\overline{z'w'}$ at the rectangle's center.
Thus, $\graph(\rho_\theta) \subset \bC^4$ is the space of (cyclically ordered, possibly degenerate) $\theta$-rectangles in $\bC$.
Form the pair of subspaces
\[
\gamma^4 \times S^1, \quad R := \bigcup_{\theta \in S^1 \setminus \{0,\pi\}} \graph(\rho_\theta) \times \{\theta\}
\]
of $\bC^4 \times S^1$.
Thus, $R$ is the space of all (cyclically ordered, possibly degenerate) rectangles in $\bC$, resolved by the aspect angle $\theta \ne 0,\pi$.
It is clear from this description that $R$ is a smooth submanifold.
The intersection $(\gamma^4 \times S^1) \cap R$ therefore parametrizes (cyclically ordered, possibly degenerate) inscribed rectangles in $\gamma$.
Moreover,
\[
(\rho_\theta(z,w),\theta) \in \cL_0 \cap \cL_1 \iff ((z,w),\rho_\theta(z,w),\theta) \in (\gamma^4 \times S^1) \cap R.
\]
We leave the proof of the following result as an exercise:

\begin{lem}
The subspaces
$\cL_0$ and $\cL_1$ intersect transversely at $(\rho_\theta(z,w),\theta)$ if and only if $\gamma^4 \times S^1$ and $R$ intersect transversely at $((z,w),\rho_\theta(z,w),\theta)$. \qed
\end{lem}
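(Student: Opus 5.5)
The plan is to reduce both transversality conditions to a single condition about one map, using the parametrization of $R$ by $\bC^2 \times (S^1 \setminus \{0,\pi\})$. Define
\[
F : \bC^2 \times (S^1\setminus\{0,\pi\}) \longrightarrow \bC^2 \times \bigl(\bC^2 \times S^1\bigr), \quad (u,\theta) \longmapsto \bigl(u, (\rho_\theta(u),\theta)\bigr).
\]
After reordering coordinates, $F$ is an embedding whose image is $R$. Likewise $\gamma^4 \times S^1$ corresponds to $L_0 \times (L_0 \times S^1) = L_0 \times \cL_0$. Transversality of $R$ and $\gamma^4\times S^1$ at $F(u,\theta)$ is then equivalent to transversality of the map $F$ to $L_0 \times \cL_0$ at $(u,\theta)$, where $u=(z,w)\in L_0$.

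The key step is an elementary linear-algebra fact. Let $N, Y$ be manifolds, $h : N \to Y$ smooth, $A \subset N$ and $B\subset Y$ submanifolds, and $x \in A$ with $h(x)\in B$. Then the graph-type map $(\mathrm{id},h) : N \to N \times Y$ is transverse to $A \times B$ at $x$ if and only if $h|_A : A \to Y$ is transverse to $B$ at $x$.

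To prove it, take an arbitrary target vector $(a,y) \in T_xN \oplus T_{h(x)}Y$. We must solve $(v, dh\,v) + (a', b) = (a,y)$ with $v\in T_xN$, $a'\in T_xA$, $b\in T_{h(x)}B$. The first coordinate forces $v = a - a'$. The second coordinate then becomes $dh(a') - b = dh(a) - y$, where $dh(a)-y$ is an arbitrary vector of $T_{h(x)}Y$ (as $y$ ranges freely). So the system is solvable for all $(a,y)$ exactly when $dh(T_xA) + T_{h(x)}B = T_{h(x)}Y$, which is transversality of $h|_A$ to $B$.

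Now apply this with
\begin{itemize}
\item $N = \bC^2\times (S^1\setminus\{0,\pi\})$ and $A = \cL_0$ restricted to $\theta \ne 0,\pi$;
\item $Y = \bC^2 \times S^1$ and $B = \cL_0$;
\item $h(u,\theta) = (\rho_\theta(u),\theta)$.
\end{itemize}
One small point needs care: the first factor of $F$ is $u$ alone, not $(u,\theta)$. I would handle this either by including the redundant $\theta$ coordinate, which does not change transversality since $S^1$ is a full factor on both sides, or by redoing the one-line linear-algebra argument directly with the $\theta$-component absorbed into $Y$.

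It remains to identify $h|_A$. Its image is exactly $\cL_1$, and $h|_A$ is a diffeomorphism onto $\cL_1$, since each $\rho_\theta$ is a diffeomorphism depending smoothly on $\theta$; its inverse is $(p,\theta)\mapsto(\rho_{-\theta}p,\theta)$. Hence $dh(T A) = T\cL_1$ at the point $(\rho_\theta(z,w),\theta)$. Transversality of $h|_A$ to $\cL_0$ is therefore literally transversality of $\cL_1$ and $\cL_0$ there, which gives the lemma.

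The only real obstacle is bookkeeping: matching the coordinate orderings, including where the $S^1$ factor sits and the clockwise vertex order $(z,w,z',w')$ versus $\gamma^4$, so that $F$ honestly identifies $R$ with $\operatorname{graph}$-type data and $\gamma^4\times S^1$ with $L_0\times\cL_0$. Beyond that the argument is formal.
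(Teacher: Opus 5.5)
Your argument is correct and complete. The paper gives no proof of this lemma (it is explicitly left as an exercise), and your reduction is a natural way to fill it in. You identify $R$ with the image of the embedding $F(u,\theta)=(u,\rho_\theta(u),\theta)$ and $\gamma^4\times S^1$ with $L_0\times\cL_0$. You then use that $h(u,\theta)=(\rho_\theta(u),\theta)$ carries $L_0\times(S^1\setminus\{0,\pi\})$ diffeomorphically onto $\cL_1$. This is the same kind of tangent-space bookkeeping the authors carry out in the proof of Lemma~\ref{lem:transverse1}.

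The wrinkle you flag is harmless. Solving $dF(v,\phi)+(a',b)=(a,y)$ directly gives $v=a-a'$ and then $dh(a',-\phi)-b=dh(a,0)-y$. Since $\phi$ is unconstrained, $(a',-\phi)$ ranges over all of $T_xA$, so the condition is again $dh(T_xA)+T\cL_0=T(\bC^2\times S^1)$. Your vertex-ordering concern is also moot, because $\gamma^4$ is invariant under permuting the four coordinates.
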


Let
\[
\Delta^4(\gamma) = \{ (z,z,z,z) \in \bC^4 : z \in \gamma \}.
\]
In the parametrization of Section \ref{sec:genericity}, the degenerate inscriptions of rectangles in $\gamma$ are the points of $\Delta(\gamma) \times (S^1 \setminus \{0,\pi\})$, while in the new one, they are the points of $\Delta^4(\gamma) \times (S^1 \setminus \{0,\pi\})$.
These loci are controlled, in the following sense:

\begin{lem}
\label{lem:cleanlocus}
For every differentiable Jordan curve $\gamma \subset \bC$,
\begin{enumerate}
\item
$\cL_0$ and $\cL_1$ intersect cleanly along $\Delta(\gamma) \times (S^1 \setminus \{0,\pi\})$, and
\item
$\gamma^4 \times S^1$ and $R$ intersect cleanly along $\Delta^4(\gamma) \times (S^1 \setminus \{0,\pi\})$.
\end{enumerate}
\end{lem}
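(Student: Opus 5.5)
The plan is to verify the two ingredients of clean intersection directly. First, near each point of the stated locus, the set-theoretic intersection coincides with the locus, which is evidently a submanifold. Second, the tangent space to the locus equals the intersection of the tangent spaces of the two intersecting submanifolds. I treat (1) in detail and deduce (2) by the same computation. Throughout I assume $\gamma$ is $C^1$ (which is what ``differentiable'' provides here), and I use coordinates in which $\rho_\theta$ is linear: write a pair as $(m+d, m-d)$, with $m$ the midpoint and $d$ the half-difference, so that
\[
\rho_\theta(m+d,m-d) = (m+e^{-i\theta}d,\, m-e^{-i\theta}d).
\]

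\emph{Tangent spaces.} Fix $z \in \gamma$, $\theta \notin \{0,\pi\}$, and let $\ell = T_z\gamma$ be the real tangent line. Then $T_{(z,z,\theta)}\cL_0 = \ell \times \ell \times \bR$. The manifold $\cL_1$ is the image of $L_0 \times (S^1\setminus\{0,\pi\})$ under $(z,w,\theta)\mapsto(\rho_\theta(z,w),\theta)$. Its differential at $(z,z,\theta)$ sends $(u,v,a)$ to $(\rho_\theta(u,v) + a X_H(z,z), a)$. Since $X_H$ vanishes on $\Delta(\bC)$, this equals $(\rho_\theta(u,v), a)$, so
\[
T\cL_1 = \{(\rho_\theta(u,v),a) : u,v\in\ell\}.
\]
Now suppose a vector lies in both tangent spaces. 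Write $u = m+d$, $v = m-d$ with $u,v\in \ell$, so $m,d\in\ell$. Membership in $T\cL_0$ requires $m \pm e^{-i\theta}d \in \ell$, hence $e^{-i\theta}d \in \ell$. Both $d$ and $e^{-i\theta}d$ then lie on the same real line through the origin, and since $\theta\ne 0,\pi$ this forces $d=0$. Hence the intersection of tangent spaces is $\{(m,m,a): m\in\ell,\ a\in\bR\}$. This is exactly $T(\Delta(\gamma)\times S^1)$, which is $2$-dimensional.

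\emph{Local coincidence of the intersection with the locus.} This is the step I expect to need care, since it uses only $C^1$ regularity rather than any transversality. Fix a compact subinterval $J\subset S^1\setminus\{0,\pi\}$ containing $\theta$. Suppose there were points of $\cL_0\cap\cL_1$ off $\Delta(\gamma)\times S^1$ converging to $(z,z,\theta)$. These would be nondegenerate $\theta_n$-rectangles, with $\theta_n\in J$, inscribed in $\gamma$ and shrinking to $z$. By $C^1$-ness, $\gamma$ is locally a graph over $\ell$ with small slope, so by the mean value theorem every chord of $\gamma$ near $z$ makes an angle with $\ell$ that tends to $0$. In the limit, both diagonals of these rectangles are such chords, and they meet at angle $\theta_n$, which stays bounded away from $0$ and $\pi$. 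This is a contradiction. Hence, near the locus, the intersection is exactly $\Delta(\gamma)\times(S^1\setminus\{0,\pi\})$, and combined with the tangent computation this gives clean intersection in (1).

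For (2) I run the same argument. At $((z,z,z,z),\theta)$ the tangent space of $R$ consists of vectors $(u,v,\rho_\theta(u,v),a)$ with $u,v$ arbitrary, again because $X_H$ vanishes on the diagonal. Intersecting with $\ell^4\times\bR$ forces $m,d,e^{-i\theta}d\in\ell$, hence $d=0$. The intersection of tangent spaces is therefore $\{(m,m,m,m,a) : m \in \ell,\ a \in \bR\} = T(\Delta^4(\gamma)\times S^1)$. The local set-theoretic statement is identical to the one in (1), because the same small rectangles would appear in either parametrization.
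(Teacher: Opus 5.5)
Your proposal is correct and follows the paper's proof. The core is the same tangent-space computation: linearity of $\rho_\theta$ shows that $T_z\gamma\oplus T_z\gamma$ and $\rho_\theta(T_z\gamma\oplus T_z\gamma)$ meet only in the diagonal copy of $T_z\gamma$ when $\theta\neq 0,\pi$, and you make explicit what the paper leaves implicit, namely that $X_H$ vanishes on $\Delta(\bC)$, which kills the $\partial_\theta$ term. Your extra paragraph showing that the intersection locally coincides with the diagonal locus is valid but not needed, since the paper's definition of ``cleanly along $C$'' asks only for $C\subset A\cap B$ and $T_xC=T_xA\cap T_xB$; it is also the only step where you use $C^1$ rather than mere differentiability.
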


\noindent
Recall that if $A$, $B$, and $C$ are differentiable submanifolds of an ambient manifold $M$, then $A$ and $B$ intersect {\em cleanly along} $C$ if $C \subset A \cap B$ and $T_x C = T_x A \cap T_x B$ for all $x \in C$.
Moreover, $A$ and $B$ intersect {\em cleanly} without qualification if $A$ and $B$ intersect cleanly along each component of $A \cap B$.

\begin{proof}
We prove (1); the proof of (2) is similar.
Suppose that $(z,z,\theta) \in \Delta(\gamma) \times (S^1 \setminus \{0,\pi\})$.
We compute
\[
T_{(z,z,\theta)} \cL_0 = T_z \gamma \oplus T_z \gamma \oplus T_\theta (S^1 \setminus \{0,\pi\})
\]
and
\[
T_{(z,z,\theta)} \cL_1 = d \rho_\theta( T_z \gamma \oplus T_z \gamma) \oplus T_\theta (S^1 \setminus \{0,\pi\}) = \rho_\theta(T_z \gamma \oplus T_z \gamma) \oplus T_\theta (S^1 \setminus \{0,\pi\}).
\]
Here we use the trivialization $T \bC^2 = \bC^2 \oplus \bC^2$ and the fact that $\rho_\theta$ is linear.
For $\theta \ne 0, \pi$, the subspaces $T_z \gamma \oplus T_z \gamma$ and $\rho_\theta(T_z \gamma \oplus T_z \gamma)$ intersect in the diagonal embedding of $T_z \gamma$ in $T_{(z,z)} \bC^2$.
Therefore, the two tangent spaces intersect in $\Delta(T_z \gamma) \oplus T_\theta (S^1 \setminus \{0,\pi\}) = T_{(z,z,\theta)} \Delta(\gamma) \times (S^1 \setminus \{0,\pi\})$.
This establishes the cleanliness of the intersection along $\Delta(\gamma) \times (S^1 \setminus \{0,\pi\})$.
\end{proof}

\begin{cor}
\label{cor:niceintersection}
The following statements are equivalent:
\begin{itemize}
\item
$\cL_0$ and $\cL_1$ intersect cleanly, and transversely away from $\Delta(\gamma) \times (S^1 \setminus \{0,\pi\})$; and
\item
$\gamma^4 \times S^1$ and $R$ intersect cleanly, and transversely away from $\Delta^4(\gamma) \times (S^1 \setminus \{0,\pi\})$.  \qed
\end{itemize}
\end{cor}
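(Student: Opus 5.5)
The statement is Corollary \ref{cor:niceintersection}. It should follow by combining the preceding unnamed Lemma (transversality of $\cL_0,\cL_1$ at $(\rho_\theta(z,w),\theta)$ is equivalent to transversality of $\gamma^4\times S^1$ and $R$ at $((z,w),\rho_\theta(z,w),\theta)$) with Lemma \ref{lem:cleanlocus}. The organizing device is the bijection
\[
\Phi : (\gamma^4 \times S^1) \cap R \longrightarrow \cL_0 \cap \cL_1, \quad ((z,w),(z',w'),\theta) \longmapsto ((z',w'),\theta).
\]
Its inverse sends $((z',w'),\theta)$ to $(\rho_{-\theta}(z',w'),(z',w'),\theta)$. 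By the displayed equivalence preceding the Lemma, $\Phi$ is well defined and bijective. Both maps are restrictions of smooth maps of the ambient spaces, so $\Phi$ is a homeomorphism of the intersection loci. Moreover, $\Phi$ carries $\Delta^4(\gamma)\times(S^1\setminus\{0,\pi\})$ onto $\Delta(\gamma)\times(S^1\setminus\{0,\pi\})$: if $z'=w'$, then $\rho_{-\theta}$ fixes $(z',w')$, so $z=w=z'=w'$, and conversely.

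The proof itself has two steps. First, transversality away from the degenerate loci corresponds under $\Phi$ pointwise, by the Lemma, since $\Phi$ preserves the complements of these loci. Second, cleanliness: by Lemma \ref{lem:cleanlocus}, both pairs intersect cleanly along their degenerate loci for every differentiable $\gamma$. Suppose the pair is transverse away from that locus. Its intersection is then a union of (a) the degenerate locus, which is an open subset of the intersection because nearby intersection points are either degenerate or transverse, and (b) transverse components. Transverse intersection is a special case of clean intersection. Hence each side of the equivalence reduces to the single condition ``transverse away from the degenerate locus,'' and the first step shows these conditions match.

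The main obstacle is the component-wise nature of cleanliness. A component of the intersection might contain points both of the degenerate locus and outside it, and there cleanliness along the degenerate locus alone does not obviously give cleanliness of the whole component. I would handle this as follows. Near the diagonal, the clean intersection computed in Lemma \ref{lem:cleanlocus} shows that the intersection is locally a manifold of dimension equal to that of $\Delta(\gamma)\times(S^1\setminus\{0,\pi\})$, namely $2$. Transverse points, however, form a $1$-dimensional manifold, since $\dim\cL_0=\dim\cL_1=3$ in the $5$-dimensional space $\bC^2\times S^1$. A standard clean-intersection neighborhood argument then shows that near the degenerate locus the intersection coincides with that locus. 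Consequently the degenerate locus is a union of components, and cleanliness holds component by component. The same argument applies verbatim to $\gamma^4\times S^1$ and $R$, or one can transport it via $\Phi$. I expect this local-structure point, rather than the transfer of transversality, to require the most care.
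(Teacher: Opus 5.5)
Your proposal is correct and is essentially the paper's argument, which the paper leaves implicit by stating the corollary without proof: the pointwise transversality lemma handles the nondegenerate points, and Lemma~\ref{lem:cleanlocus} supplies cleanliness along the degenerate loci for every curve. You also check that cleanliness along $\Delta(\gamma)\times(S^1\setminus\{0,\pi\})$ forces this locus to be open (it is evidently closed) in $\cL_0\cap\cL_1$, so cleanliness holds component by component; this fills in a detail the paper passes over, and it is this local clean-intersection argument, not your earlier remark that nearby points are ``either degenerate or transverse,'' that actually justifies the openness.
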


With the new parametrization in hand, we turn to the first piece of the Genericity Theorem.
Its proof involves the use of the Sard-Smale theorem for Banach manifolds.
Since $C^\infty(S^1,\bC)$ is not itself a Banach manifold, this approach requires passage through maps with possibly less regularity, followed by a simple topological argument.

For every $r \in \bZ^+$, let $C^r(S^1,\bC)$ denote the space of $r$-times continuously differentiable functions from $S^1$ to $\bC$.
It admits a metric, the $C^r$ metric, with respect to which it is a Banach manifold.
The topology induced by the metric is the $C^r$ topology.
The space of smooth functions $C^\infty(S^1,\bC)$ is a subspace of $C^r(S^1,\bC)$.
It inherits the $C^r$ topology.
By a basic result, $C^\infty(S^1,\bC)$ is dense in $C^r(S^1,\bC)$ in the $C^r$ topology.
The $C^\infty$ topology on $C^\infty(S^1,\bC)$ has as a basis the union of the $C^r$ topologies over all finite $r$.
It is also induced by a metric, the $C^\infty$ metric.
However, $C^\infty(S^1,\bC)$ is only a Frech\'et manifold with respect to this metric, not a Banach manifold.

Let $\Emb^r = \Emb^r(S^1,\bC) \subset C^r(S^1,\bC)$ denote the subspace of $r$-times continuously differentiable embeddings.
It is an open subspace of $C^r(S^1,\bC)$.
Likewise, let $\Emb^\infty = \Emb^\infty(S^1,\bC)$ denote the space of smooth embeddings.
Thus, $\Emb^\infty = \Emb^r \cap C^\infty(S^1,\bC)$.
Because $\Emb^r$ is open, it follows that $\Emb^\infty$ is $C^r$-dense in $\Emb^r$.
For any $f \in \Emb^r$, we write $\gamma = f(S^1)$.

For all $r \in \bZ^+$, define
\[
\Emb^r_1 := \{ f \in \Emb^r : \gamma^4 \times S^1 \textup{ intersects } R \textup{ cleanly, and transversely away from } \Delta^4(\gamma) \times (S^1 \setminus \{0,\pi\}) \}.
\]
For any set $X$, let $X^{(n)}$ denote the $n$-tuples of distinct points of $X$.
Let
\[
R^* = R \cap ( \bC^{(4)} \times ( S^1 \setminus \{0,\pi\})).
\]
Note that
\[
(\gamma^4 \times S^1) \cap R \setminus (\Delta^4(\gamma) \times (S^1 \setminus \{0,\pi\})) = (\gamma^{(4)} \times (S^1 \setminus \{0,\pi\})) \cap R^*:
\]
both sides parametrize the (nondegenerate) inscribed rectangles in $\gamma$.
By Lemma \ref{lem:cleanlocus}, we therefore have
\[
\Emb^r_1 := \{ f \in \Emb^r : \gamma^{(4)} \times (S^1 \setminus \{0,\pi\})  \textup{ intersects } R^* \textup{ transversely} \}.
\]
Lastly, define
\[
\Emb^\infty_1 := \Emb^\infty \cap \Emb^r_1,
\]
and note that it is independent of the choice of $r$.

\begin{lem}
\label{lem:transverse1}
The space $\Emb^r_1$ is open and dense in $\Emb^r$ in the $C^r$ topology, for all $r \in \bZ^+$.
\end{lem}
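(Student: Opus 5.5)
Openness and density are proved separately; the main tool for density is the parametric transversality (Sard--Smale) theorem.

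\emph{Openness.} Fix $f \in \Emb^r_1$ with $r \ge 1$. First localize the problem away from the degenerate locus and from $\theta \in \{0,\pi\}$. Cleanliness along $\Delta^4(\gamma) \times (S^1\setminus\{0,\pi\})$ holds for every differentiable curve by Lemma~\ref{lem:cleanlocus}, so only transversality at nondegenerate inscribed rectangles matters. I would argue that transversality at such rectangles is a $C^1$-open condition on compact pieces, and that the noncompact ends are uniformly controlled. Near $\Delta^4$, a first-order computation like the one in Lemma~\ref{lem:cleanlocus} should show that for $\theta$ bounded away from $0,\pi$, no nondegenerate inscribed $\theta$-rectangles exist in a neighborhood of the diagonal; this holds uniformly for curves $C^1$-close to $f$. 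The reason is that a small inscribed rectangle of aspect angle $\theta \neq 0,\pi$ would force two chords of nearly equal direction (the tangent direction) meeting at angle $\theta$. Near $\theta \to 0,\pi$, the rectangles degenerate toward binormals or toward the diagonal. I would handle this in the same fashion, with the binormal ends treated later via $\Emb^\infty_2$, or simply by restricting to compact subsets $K_\delta = \{\theta \in [\delta,\pi-\delta]\cup[\pi+\delta,2\pi-\delta]\}$ and taking the intersection of these open conditions only if uniformity permits. I expect this uniformity near $\theta \in \{0,\pi\}$ to be the main obstacle. What I would try first is to show that for $f$ in $\Emb^r_1$, transversality near those ends follows from the Morse-type analysis of $H|L_0$, or to quote the compactness of $\cR(\gamma)$.

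\emph{Density.} Consider the universal evaluation map
\[
\ev : \Emb^r \times (S^1)^{(4)} \times (S^1\setminus\{0,\pi\}) \to \bC^4 \times S^1, \quad (f,t_1,\dots,t_4,\theta) \mapsto (f(t_1),\dots,f(t_4),\theta),
\]
restricted to the preimage of the open set $\bC^{(4)} \times (S^1\setminus\{0,\pi\})$, which contains $R^*$. The key step is to show that $\ev$ is a submersion, hence transverse to $R^*$. Because the $t_i$ are distinct, we can choose variations $\xi \in C^r(S^1,\bC)$ supported in disjoint neighborhoods of the $t_i$, prescribing arbitrary values $\xi(t_i) \in \bC$. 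The $f$-derivative alone therefore surjects onto $T\bC^4$, and the $\theta$ direction is hit by $\partial_\theta$. It follows that $\ev^{-1}(R^*)$ is a $C^{r}$ Banach submanifold. The projection to $\Emb^r$ is Fredholm of index $4 + 1 - \operatorname{codim} R^* = 5 - 4 = 1$, since $R^*$ has real dimension $4 + 1 + 1 + 1 = 7$ (center, half-diagonal vector, angle) inside a space of dimension $9$, up to checking the count. Taking $r$ large relative to the index, Sard--Smale shows that the regular values form a residual subset. For a regular value $f$, the slice map $(t_i,\theta) \mapsto (f(t_i),\theta)$ is transverse to $R^*$, i.e.\ $f \in \Emb^r_1$. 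Residual sets are dense in the Baire space $\Emb^r$. If $r$ must be large for Sard--Smale to apply, then density for small $r$ follows by approximating in $C^{r'}$ with $r' \ge r$ and using the density of $\Emb^\infty$ in $\Emb^r$.
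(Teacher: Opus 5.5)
Your density argument is the paper's: the same universal evaluation map, a submersion because variations can be prescribed independently at the four distinct parameters, a Fredholm projection from $\ev^{-1}(R^*)$, and Sard--Smale. Two small remarks. First, $R^*$ has dimension $2+2+1=5$, not $7$, although your codimension $4$ and index $1$ are correct. Second, your handling of $r=1$ by passing through larger $r'$ (Sard--Smale needs $C^2$ for index $1$) fills in a step the paper skips.

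The gap is openness, which you leave unproven. The paper disposes of it in one sentence (``the transversality condition defining it is an open condition''). You are right, however, that the parameter space $(S^1)^{(4)}\times(S^1\setminus\{0,\pi\})$ is noncompact, so this needs an argument. Your ingredients do prove that each truncated condition is $C^r$-open, namely transversality at rectangles whose $\theta$ is at distance at least $\delta$ from $\{0,\pi\}$. Indeed, take non-transverse rectangles of curves $g_n\to f$. A nondegenerate limit would be non-transverse for $f$, since failure of transversality is closed under $C^1$ convergence. A limit collapsing to a point is excluded by your chord-direction argument. But this only makes $\Emb^r_1$ a dense $G_\delta$, not an open set. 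Neither repair you suggest at the ends $\theta\to 0,\pi$ is available:
\begin{itemize}
\item Compactness of $\cR(\gamma)$ is the conclusion of Theorem~\ref{thm:genericity}. That theorem is deduced from this very lemma, and it holds only for $f\in\Emb^\infty_1\cap\Emb^\infty_2$.
\item Morse nondegeneracy of $H|L_0$ off $\Delta(\gamma)$ is the defining condition of $\Emb^r_2$. It is not a consequence of $f\in\Emb^r_1$.
\end{itemize}

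The difficulty at these ends is real. Suppose that near a binormal, $\gamma$ consists of the arcs $x=-1+a(y)$ and $x=1$. The inscribed rectangles with two vertices on each arc are then the axis-parallel ones whose left vertices lie at heights $v\neq v'$ with $a(v)=a(v')$.
\begin{itemize}
\item For $a(y)=y^4$, which gives a degenerate binormal, these rectangles are all transverse.
\item Now take the $C^\infty$-small perturbation $a(y)=y^4-\eta y^2$. Consider the rectangle whose left vertices are the two points of vertical tangency $y=\pm\sqrt{\eta/2}$. All four of its tangent lines are vertical. Hence the horizontal offset between the midpoints of its diagonals has vanishing differential along $\gamma^{(4)}\times S^1$. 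This rectangle is therefore non-transverse, at angle approximately $\sqrt{2\eta}\neq 0$.
\end{itemize}
So the uniformity you need cannot be derived from $f\in\Emb^r_1$ alone. If such a curve lies in $\Emb^r_1$ (whose definition imposes no condition on binormals), then $\Emb^r_1$ is not open at it.

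What your approach can establish is openness of $\Emb^r_1\cap\Emb^r_2$. Once the binormals of $f$ are nondegenerate, a rescaling as in the proof of Lemma~\ref{lem:smooth2} should control rectangles with $\theta$ near $0$ or $\pi$ uniformly for $g$ near $f$. That is what Theorem~\ref{thm:genericity} requires, while the Derivative Property needs only density.
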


The proof of Lemma \ref{lem:transverse1} follows a standard line of argument.

\begin{proof}[Proof of Lemma \ref{lem:transverse1}]
The set $\Emb^r_1$ is open in the $C^r$ topology, because the transversality condition defining it is an open condition.
We argue that $\Emb^r_1$ is $C^r$-dense in $\Emb^r$ by identifying it with the set of regular values of a Fredholm map between Banach manifolds and invoking the Sard-Smale theorem.

Form the universal space
\[
E := \Emb^r \times (S^1)^{(4)} \times (S^1 \setminus \{0,\pi\}),
\]
and the evaluation map 
\begin{eqnarray*}
\ev : E &\longrightarrow& \bC^{(4)} \times (S^1 \setminus \{0,\pi\}), \\
(f,z_1,z_2,z_3,z_4,\theta) &\longmapsto& (f(z_1),f(z_2),f(z_3),f(z_4),\theta).
\end{eqnarray*}
We claim that this map is a submersion.
To check this, select a point $(f,z_1,z_2,z_3,z_4,\theta)$ in the domain and a tangent vector $(v_1,v_2,v_3,v_4,w)$ to its image under $\ev$.
The tangent space to $\Emb^r$ at $f$ is identified with the space of $C^r$-vector fields along $f$.
Choose such a vector field $X$ which matches $v_k$ at $f(z_k)$ for each value $k=1,\dots,4$: this is possible to do, because the $z_k$ are pairwise distinct.
Then $d (\ev) (X,0,0,0,0,w) = (v_1,v_2,v_3,v_4,w)$.
This checks that $\ev$ is a submersion.

The subspace $R^* \subset \bC^{(4)} \times (S^1 \setminus \{0,\pi\})$ is a smooth submanifold of a smooth, finite-dimensional manifold.
The space $E$ is a Banach manifold.
It follows that
\[
\widetilde{R} := (\ev)^{-1}(R^*) \subset E
\]
is a Banach submanifold of finite codimension.
The projection map
\[
p : E \longrightarrow \Emb^r
\]
is Fredholm, since it is a submersion with finite dimensional fibers.
Because $\widetilde{R}$ has finite codimension within the domain, the restriction of $p$ to $\widetilde{R}$ is Fredholm, too.
By the Sard-Smale theorem, its set of regular values $\mathrm{Reg}(p | \widetilde{R})$ is $C^r$-dense in $\Emb^r$.

We claim that $\mathrm{Reg}(p | \widetilde{R}) = \Emb^r_1$.
Checking this is a matter of linear algebra and bookkeeping with tangent spaces.
We go through the argument in detail; the proof of the similar Proposition \ref{prop:E2} omits the similar details.

First, we show that (a) $f \in \mathrm{Reg}(p | \widetilde{R})$ if and only if (b) $p^{-1}(f) \pitchfork \widetilde{R}$.
Condition (a) holds if and only if, for all $x \in p^{-1}(f)$, the differential $d (p | \widetilde{R})_x$ surjects $T_x \widetilde{R}$ onto $T_f \Emb^r$.
This condition on $x$ is equivalent to the condition that $d p_x$ surjects $T_x \widetilde{R}$ onto $T_f \Emb^r$.
The map $d p_x$ surjects $T_x E$ onto $T_f \Emb^r$, and its kernel equals $T_x p^{-1}(f)$.
Hence $d p_x$ surjects $T_x \widetilde{R}$ onto $T_f \Emb^r$ if and only if $T_x p^{-1}(f) + T_x \widetilde{R} = T_x E^r$.
This holds for all $x \in p^{-1}(f)$ if and only if $p^{-1}(f) \pitchfork \widetilde{R}$, as required.

Second, we show that (b) $p^{-1}(f) \pitchfork \widetilde{R}$ if and only if (c) $f \in \Emb^r_1$.
Let $x \in p^{-1}(f) \cap \widetilde{R}$.
Then $x$ is a transverse point of intersection if and only if $T_x p^{-1}(f) + T_x \widetilde{R} = T_x E^r$.
Write $y = \ev(x)$.
As we argued above, $d (\ev)_x$ surjects $T_x E^r$ onto $T_y (\bC^{(4)} \times (S^1 \setminus \{0,\pi\}))$.
By construction, $T_x \widetilde{R}$ is the preimage of $T_y R$ under $d (\ev)_x$.
Also by construction, $\ev$ is a $C^r$ diffeomorphism from $p^{-1}(f)$ to $\gamma^{(4)} \times (S^1 \setminus \{0,\pi\})$.
Hence $T_x p^{-1}(f)$ is the preimage of $T_y (\gamma^{(4)} \times (S^1 \setminus \{0,\pi\}))$ under $d (\ev)_x$.
On the other hand, if $\phi : V \to W$ is a surjective linear map, and if $A$ and $B$ are subspaces of $W$, then $A+B = W$ if and only if $\phi^{-1}(A) + \phi^{-1}(B) = V$.
It follows that $x$ is a transverse point of intersection between $p^{-1}(f)$ and $\widetilde{R}$ if and only if $y$ is a transverse point of intersection between $\gamma^{(4)} \times (S^1 \setminus \{0,\pi\})$ and $R^*$.
The first condition holding for all $x \in p^{-1}(f) \cap \widetilde{R}$ is equivalent to (b), while the second condition holding for all $y \in ( \gamma^{(4)} \times (S^1 \setminus \{0,\pi\})) \cap R^*$ is equivalent to (c).
\end{proof}

\begin{prop}
\label{prop:E1}
The space $\Emb^\infty_1$ is open and dense in $\Emb^\infty$ in the $C^\infty$ topology.
\end{prop}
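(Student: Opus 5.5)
We deduce \Cref{prop:E1} from Lemma \ref{lem:transverse1} by a short topological argument, treating openness and density separately.

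\emph{Openness.} Fix any $r \ge 1$. By Lemma \ref{lem:transverse1}, $\Emb^r_1$ is open in $\Emb^r$ in the $C^r$ topology. Intersecting with $\Emb^\infty$ shows that $\Emb^\infty_1 = \Emb^\infty \cap \Emb^r_1$ is open in $\Emb^\infty$ with respect to the subspace $C^r$ topology. The $C^\infty$ topology on $\Emb^\infty$ is finer than each $C^r$ topology, since its basis contains the union of the $C^r$ topologies. Hence $\Emb^\infty_1$ is $C^\infty$-open. Here we use that $\Emb^\infty_1$ does not depend on $r$, so the single choice $r=1$ suffices.

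\emph{Density.} Let $f \in \Emb^\infty$, and let $U$ be a $C^\infty$-open neighborhood of $f$. By the description of the basis, we may shrink $U$ to a basic neighborhood that is $C^r$-open for some finite $r$; concretely, take a $C^r$-ball $B_r(f,\epsilon) \cap \Emb^\infty \subset U$. We want a point of $\Emb^\infty_1$ inside it. By Lemma \ref{lem:transverse1}, $\Emb^r_1$ is $C^r$-dense and $C^r$-open in $\Emb^r$. Therefore $B_r(f,\epsilon) \cap \Emb^r_1$ is a nonempty $C^r$-open subset of $\Emb^r$. Since $\Emb^\infty$ is $C^r$-dense in $\Emb^r$, this open set contains some smooth $g$. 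Then $g \in \Emb^\infty \cap \Emb^r_1 = \Emb^\infty_1$ and $g \in U$, which proves density.

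The only subtle point, and the step to check most carefully, is the claim that every $C^\infty$-neighborhood of $f$ contains a $C^r$-neighborhood for some finite $r$. This is true for the $C^\infty$ (Whitney) topology on maps from the compact manifold $S^1$, because there the Whitney topology coincides with the weak/Fréchet topology, which is the union of the $C^r$ topologies exactly as described in the paper. The argument also relies on two features already noted: $\Emb^r_1$ is defined by a transversality condition that makes sense for $C^r$ maps, and its intersection with $\Emb^\infty$ is independent of $r$.
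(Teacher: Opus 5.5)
Your proposal is correct and follows essentially the paper's argument. Openness comes from the $C^r$-openness of $\Emb^r_1$ together with the $C^\infty$ topology being finer, and density comes from combining the $C^r$-density of $\Emb^r_1$ with the $C^r$-density of $\Emb^\infty$ in $\Emb^r$ for every $r$. The paper phrases the density step as a chain of set-level density statements rather than your pointwise neighborhood argument.
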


\begin{proof}[Proof of Proposition \ref{prop:E1}]
Fix $r \in \bZ^+$ and apply Lemma \ref{lem:transverse1}.
Because $\Emb^r_1 \subset \Emb^r$ is open in the $C^r$ topology, and because $\Emb^\infty$ is $C^r$-dense in $\Emb^r$, it follows that $\Emb^\infty_1 = \Emb^\infty \cap \Emb^r_1$ is  $C^r$-dense in $\Emb^r_1$ and that $\Emb^\infty_1$ is open in the $C^\infty$ topology.
Because $\Emb^r_1$ is  $C^r$-dense in $\Emb^r$, it follows that $ \Emb^\infty_1$ is  $C^r$-dense in $\Emb^r$.
Therefore, $\Emb^\infty_1 \subset \Emb^\infty$ is  $C^r$-dense in $\Emb^\infty$.
Since $r$ was arbitrary, it follows that $\Emb^\infty_1$ is $C^\infty$-dense in $\Emb^\infty$.
\end{proof}

\noindent
{\em Remark.} Proposition \ref{prop:E1} suffices in place of the Genericity Theorem in the proof of the Derivative Property for the spectral invariants.

\subsection{Binormals.}
\label{sec:binormals}
We begin by identifying binormals of $\gamma$ with critical points of $H | L_0$ away from $\Delta(\gamma)$, where $L_0 = \gamma \times \gamma$.
In parallel to the previous section, we show that for any smooth curve $\gamma$, the locus $H^{-1}(0) = \Delta(\gamma) \subset L_0$ is a submanifold of the zero set of $d H$.
We then show that for an open dense set of parametrized smooth Jordan curves, $H | L_0$ is a Morse-Bott function, and Morse away from $\Delta(\gamma)$.
We must then argue that for such a smooth Jordan curve, the set $\Crit(H) \times \{0,\pi\}$ compactifies $\cL_0 \cap \cL_1$ to a smooth manifold in a neighborhood of $L_0 \times \{0 , \pi\}$.
In order to do so, we work with a Weinstein neighborhood $N(L_0)$ of $L_0$ in $\bC^2$.
(The use of symplectic geometry may seem a little surprising here.)
By a rescaling method, we replace the intersection of $\cL_1$ with $N(L_0) \times (-\epsilon,\epsilon)$ by a smooth submanifold $X \subset N(L_0) \times (-\epsilon,\epsilon)$.
The submanifold $X$ intersects $\cL_0$ cleanly in $\Delta(\gamma) \times (-\epsilon,\epsilon)$, transversely away from $\Delta(\gamma) \times (-\epsilon,\epsilon)$, and satisfies $\cL_0 \cap X = \Crit(H) \times \{ 0 \} \sqcup (\cL_0 \cap \cL_1 \cap (N(L_0) \times (-\epsilon,\epsilon)))$.
The same applies in $N(L_0) \times (\pi-\epsilon,\pi+\epsilon)$.
In this way, we obtain the desired smooth manifold structure of $\cR(\gamma)$ in a neighborhood of $\gamma \times \gamma \times \{0,\pi\}$, for an open dense subset of parametrized Jordan curves.

\begin{lem}
\label{lem:binormal}
A point $(z_1,z_2) \in S^1 \times S^1 \setminus \Delta(S^1)$ is a critical point of $H \circ (f \times f)$ if and only if $(f(z_1),f(z_2)) \in \gamma \times \gamma \setminus \Delta(\gamma)$ is a binormal of $\gamma$.
\end{lem}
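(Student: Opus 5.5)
The plan is a direct computation of the differential of $G := H \circ (f \times f)$ on the torus $S^1 \times S^1$. Write $H(z,w) = \frac14 |z-w|^2 = \frac14 \langle z-w, z-w\rangle$, where $\langle\cdot,\cdot\rangle$ denotes the real Euclidean inner product on $\bC = \bR^2$. Then
\[
G(z_1,z_2) = \tfrac14 \langle f(z_1)-f(z_2), f(z_1)-f(z_2) \rangle .
\]
Differentiating in each variable separately gives
\[
\partial_{z_1} G = \tfrac12 \langle f'(z_1), f(z_1)-f(z_2)\rangle, \qquad \partial_{z_2} G = -\tfrac12 \langle f'(z_2), f(z_1)-f(z_2)\rangle .
\]
So $(z_1,z_2)$ is a critical point of $G$ if and only if both inner products vanish.

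Next we translate this vanishing into the binormal condition. Since $f$ is an embedding, $f'(z_i) \neq 0$ spans $T_{f(z_i)}\gamma$. Hence $\langle f'(z_i), f(z_1)-f(z_2)\rangle = 0$ says exactly that $T_{f(z_i)}\gamma$ is orthogonal to the segment $\overline{f(z_1) f(z_2)}$. We also need the point to lie off the diagonal. Because $z_1 \neq z_2$ and $f$ is injective, $f(z_1) \neq f(z_2)$. Therefore the segment is nondegenerate and $(f(z_1),f(z_2)) \in L_0 \setminus \Delta(\gamma)$.

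Combining these two observations, the vanishing of both partial derivatives is equivalent to the definition of a binormal from \Cref{sec:genericity}. Both directions of the ``if and only if'' follow at once, since every step above is an equivalence.

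There is no real obstacle. The only point needing care is that $S^1\times S^1\setminus\Delta(S^1)$ maps onto $\gamma\times\gamma\setminus\Delta(\gamma)$, which uses injectivity of $f$, and that the nonvanishing of $f'$ lets us replace the tangent line $T_{f(z_i)}\gamma$ by the single vector $f'(z_i)$.
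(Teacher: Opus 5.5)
Your proposal is correct and matches the paper's proof: you compute the two partial derivatives of $H \circ (f \times f)$, use injectivity of $f$ to get $f(z_1) \neq f(z_2)$, and use $f' \neq 0$ to turn the vanishing of the inner products into orthogonality of the tangent lines to the segment $\overline{f(z_1)f(z_2)}$. Your write-up also avoids the paper's typos ($|f(z_1)^2 - f(z_2)|^2$, and ``orthogonal to $z_1 - z_2$'' where $f(z_1)-f(z_2)$ is meant).
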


\begin{proof}
Suppose that $(z_1,z_2) \in S^1 \times S^1 \setminus \Delta(S^1)$.
The differential of $H \circ (f \times f)(z_1,z_2) = \frac14 |f(z_1)^2 - f(z_2)|^2$ is equal to 
\[
\left( \frac12 \langle f'(z_1), f(z_1)-f(z_2) \rangle, -\frac12 \langle f'(z_2), f(z_1)-f(z_2) \rangle \right)
\]
in coordinates\footnote{Recall that a \emph{coordinate} on $X$ is, among other things, a function $X \longrightarrow \bR$.} $(\partial/\partial z_1,\partial/\partial z_2)$ on $T^*_{(z_1,z_2)} (S^1 \times S^1)$.
Because $f$ is an injection and $z_1 \ne z_2$, the differential vanishes iff $f'(z_1)$ and $f'(z_2)$ are both orthogonal to $z_1-z_2$.
Because $f'$ is nonvanishing, this is equivalent to $T_{f(z_1)} \gamma$ and $T_{f(z_2)} \gamma$ both being perpendicular to $\overline{f(z_1) f(z_2)}$.
\end{proof}

If $M$ is a manifold and $f: M \to \bR$ is a $C^2$ function, then $f$ is {\em Morse} if the graph of $df$ intersects the 0-section transversely in $T^* M$, and $f$ is {\em Morse-Bott} if the graph of $df$ intersects the 0-section cleanly.
These notions are usually defined with reference to the nondegeneracy of the Hessian, but they are most useful to us in this form; cf. \cite[II.6.1]{gg1973}.

\begin{lem}
\label{lem:clean2}
For all $f \in \Emb^2(S^1,\bC)$, the copy of $\Delta(S^1)$ in the $0$-section of $T^* (S^1 \times S^1)$ is a clean component of intersection between the graph of $d (H \circ (f \times f) )$ and the $0$-section.
\end{lem}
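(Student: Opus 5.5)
The plan is to write $h = H \circ (f \times f) : S^1 \times S^1 \to \bR$, so that
\[
h(z_1,z_2) = \tfrac14 |f(z_1) - f(z_2)|^2 ,
\]
and to verify cleanliness directly from the definition. There are three things to check: (i) $\Delta(S^1)$ lies in the intersection of $\graph(dh)$ with the $0$-section; (ii) at each point of $\Delta(S^1)$ the two tangent spaces meet exactly in $T\Delta(S^1)$; (iii) $\Delta(S^1)$ is a whole component of the intersection, i.e.\ no other zeros of $dh$ accumulate on it. Since $f \in \Emb^2$, $h$ is $C^2$, so $\graph(dh)$ is a $C^1$ submanifold of $T^*(S^1 \times S^1)$ and tangent spaces make sense.

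For (i), I will use the formula for $dh$ computed in the proof of Lemma \ref{lem:binormal}. That formula is valid on all of $S^1 \times S^1$, and both of its components contain the factor $f(z_1) - f(z_2)$, which vanishes on the diagonal.

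For (ii), I will use the standard identification at a zero $x$ of $dh$: $T_{(x,0)}\graph(dh) = \{(v, \mathrm{Hess}_x h(v)) \}$ and $T_{(x,0)}(0\text{-section}) = \{(v,0)\}$. Their intersection is therefore $\ker \mathrm{Hess}_x h$. At $x = (z,z)$ I differentiate the formula for $dh$ once more. Every term still carrying a factor $f(z_1) - f(z_2)$ vanishes, which leaves
\[
\mathrm{Hess}_{(z,z)} h = \tfrac12 |f'(z)|^2 \begin{pmatrix} 1 & -1 \\ -1 & 1 \end{pmatrix}.
\]
Because $f'(z) \neq 0$, the kernel is exactly the span of $(1,1)$, which is $T_{(z,z)}\Delta(S^1)$.

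For (iii), I need a neighborhood $U$ of $\Delta(S^1)$ in which $dh$ has no zeros off the diagonal. Write
\[
f(z_1) - f(z_2) = (z_1 - z_2) \int_0^1 f'\bigl(z_2 + t(z_1 - z_2)\bigr)\, dt ,
\]
with $z_1 - z_2$ taken in a local lift. Then the first component of $dh$ is $\tfrac12 (z_1 - z_2)$ times $\int_0^1 \langle f'(z_1), f'(z_2 + t(z_1 - z_2)) \rangle \, dt$. By uniform continuity of $f'$ and the bound $\min |f'| > 0$ on the compact circle, this integral is bounded below by $\tfrac12 \min |f'|^2 > 0$ once $|z_1 - z_2|$ is small. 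So $dh \neq 0$ off the diagonal in a uniform neighborhood, and $\Delta(S^1)$ is open and closed in the intersection.

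The only mildly delicate step is (iii), because it needs this uniform estimate rather than a pointwise one. Alternatively, (iii) follows from (ii) via the Morse--Bott lemma, but the direct estimate avoids relying on that lemma at the lower regularity of $C^2$.
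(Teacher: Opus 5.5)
Your proposal is correct and follows the same route as the paper. The paper also computes the tangent space to the graph of $d(H\circ(f\times f))$ at $(z,z,0,0)$ as the row span of a $2\times 4$ matrix whose right-hand block is the Hessian $\tfrac12|f'(z)|^2\begin{pmatrix}1&-1\\-1&1\end{pmatrix}$. It then intersects this with the zero section and, using $f'(z)\neq 0$, gets the span of $(1,1,0,0)$. The only difference is your step (iii): the paper leaves it implicit, since it follows from nondegeneracy of the Hessian in the $(1,-1)$ direction by a short secant argument, and your uniform estimate establishing it directly is also correct.
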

\noindent
Equivalently, the restriction of $H$ to $\gamma \times \gamma \subset \bC^2$, $\gamma = f(S^1)$, is Morse-Bott in a neighborhood of $\Delta(\gamma)$.

\begin{proof}
The proof is a computation.
We proceed as in the proof of Lemma \ref{lem:binormal}.
In coordinates  $(z_1,z_2,\xi_1=\partial/\partial z_1,\xi_2=\partial/\partial z_2)$ on $T^* (S^1 \times S^1)$, the graph of $d H \circ (f \times f)$ is equal to the set of points whose $\xi_i$-coordinate is given by the $z_i$-partial derivative of $H \circ (f \times f)(z_1,z_2) = \frac14 |f(z_1)-f(z_2)|^2$.
That is, it equals the set of points
\[
\left( z_1,z_2, \frac12 \langle f'(z_1), f(z_1)-f(z_2) \rangle, -\frac12 \langle f'(z_2), f(z_1)-f(z_2) \rangle \right),
\]
where $z_1, z_2 \in S^1$.
In coordinates $(d z_1, d z_2, d \xi_1, d \xi_2)$ on the tangent space $T \, T^*(S^1\times S^1)$, the tangent space to the graph at this point is spanned by the tangent vectors obtained by differentiating the coordinates of this point with respect to $z_1$ and $z_2$: they are the rows of the matrix
\begin{eqnarray*}
\left(
\begin{matrix}
1 & 0 & \frac12 \langle f''(z_1), f(z_1)-f(z_2) \rangle + \frac12| f'(z_1)|^2 & -\frac12 \langle f'(z_1), f'(z_2) \rangle \\
0 & 1 & -\frac12 \langle f'(z_1), f'(z_2) \rangle & -\frac12 \langle f''(z_2), f(z_1)-f(z_2) \rangle +\frac12 | f'(z_2) |^2
\end{matrix}
\right).
\end{eqnarray*}
At a point $(z,z,0,0)$ on the graph, this simplifies to
\begin{eqnarray*}
\left(
\begin{matrix}
1 & 0 & \frac12 |f'(z)|^2 & -\frac12|f'(z)|^2 \\
0 & 1 & -\frac12|f'(z)|^2  & \frac12|f'(z)|^2
\end{matrix}
\right).
\end{eqnarray*}
Meanwhile, the tangent space to the 0-section at $(z,z,0,0)$ is spanned by the rows of the matrix
\[
\left(
\begin{matrix}
1 & 0 & 0 & 0 \\
0 & 1 & 0 & 0
\end{matrix}
\right),
\]
while the tangent space to the copy of $\Delta(S^1)$ in the 0-section at $(z,z,0,0)$ is spanned by the single row vector
\[
\left(
\begin{matrix}
1 & 1 & 0 & 0
\end{matrix}
\right).
\]
The value $f'(z)$ is nonzero, because $f$ is an embedding.
It follows that the intersection of the first two tangent spaces coincides with the third at $(z,z,0,0)$, for all $z \in S^1$.
This gives the result.
\end{proof}

For all $r \in \bZ^+$, $r \ge 2$, define 
\[
\Emb^r_2 = \{ f \in \Emb^r : H \circ (f \times f): S^1 \times S^1 \to \bR \textup{ is Morse-Bott and is Morse away from } \Delta(S^1) \}.
\]
Equivalently, the restriction of $H$ to $\gamma \times \gamma$, $\gamma = f(S^1)$, is Morse-Bott, and it is Morse away from $\Delta(\gamma)$.
By Lemma \ref{lem:clean2}, this is the same as
\[
\Emb^r_2 = \{ f \in \Emb^r : H \circ (f \times f): S^1 \times S^1 \to \bR \textup{ is Morse away from } \Delta(S^1) \}.
\]
Define
\[
\Emb^\infty_2 = \Emb^\infty \cap \Emb^r_2,
\]
and notice that it is independent of the choice of $r$.

\begin{prop}
\label{prop:E2}
$\Emb^\infty_2$ is open and dense in $\Emb^\infty$ in the $C^\infty$ topology.
\end{prop}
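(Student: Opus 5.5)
We mirror the proof of Proposition \ref{prop:E1}: first show that $\Emb^r_2$ is open and dense in $\Emb^r$ in the $C^r$ topology for each $r \ge 2$, then pass to $\Emb^\infty$ by the same topological argument.

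\emph{Openness.} By Lemma \ref{lem:clean2}, $\Emb^r_2$ consists of those $f$ for which the graph of $d(H\circ(f\times f))$ meets the zero section of $T^*(S^1\times S^1)$ transversely away from $\Delta(S^1)$. The clean component along $\Delta(S^1)$ is present for every $f$, and its cleanliness is uniform: the computation in Lemma \ref{lem:clean2} shows the Hessian in the normal direction to the diagonal is $|f'(z)|^2>0$. So for $f'$ near $f$ in $C^2$ there is a fixed neighborhood $U$ of $\Delta(S^1)$ containing no critical points other than the diagonal. On the compact complement of $U$, nondegeneracy of critical points is an open condition in the $C^2$ topology, hence in the $C^r$ topology for $r\ge 2$.

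\emph{Density.} Use Sard--Smale with a universal space, as in Lemma \ref{lem:transverse1}. Set
\[
E = \Emb^r \times (S^1)^{(2)},
\]
and consider the section $F(f,z_1,z_2) = d(H\circ(f\times f))_{(z_1,z_2)}$ of the pullback of $T^*(S^1\times S^1)$, which has rank $2$. By Lemma \ref{lem:binormal}, $F$ vanishes exactly at binormals.

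The key step is to show that the vertical differential of $F$ is surjective at its zeros. Fix a zero, so $(f(z_1),f(z_2))$ is a binormal. Vary $f$ by a $C^r$ vector field $X$ supported near $z_1$ and vanishing at $z_1$. The variation of $\frac12\langle f'(z_1),f(z_1)-f(z_2)\rangle$ is then $\frac12\langle X'(z_1), f(z_1)-f(z_2)\rangle$, while the second component is unchanged. Since $f(z_1)\ne f(z_2)$, we can choose $X'(z_1)$ freely to make this any real number. Arguing symmetrically near $z_2$ handles the second component, so the differential is surjective.

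Consequently $\widetilde{B} = F^{-1}(0)$ is a Banach submanifold of codimension $2$. The projection $\widetilde{B}\to \Emb^r$ is Fredholm of index $0$, and by Sard--Smale its regular values are dense. It remains to identify these regular values with $\Emb^r_2$. This is the same linear-algebra bookkeeping as before: $f$ is a regular value exactly when the linearization of $F(f,\cdot)$ at each zero is surjective, that is, when each binormal is a nondegenerate critical point.

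\emph{Passage to $C^\infty$.} Since $\Emb^\infty$ is $C^r$-dense in $\Emb^r$ and $\Emb^r_2$ is $C^r$-open and dense, the argument of Proposition \ref{prop:E1} shows that $\Emb^\infty_2$ is $C^\infty$-open and dense.

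\emph{Main obstacle.} The delicate points are the uniform control near the diagonal needed for openness, where the binormal set is noncompact, and making sure the perturbations in the surjectivity step are genuine $C^r$ embeddings. The latter is automatic, because embeddings form an open set and the perturbations are small.
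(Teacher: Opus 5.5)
Your proposal is correct and follows essentially the same route as the paper. Both arguments apply Sard--Smale to the map $(f,z_1,z_2)\mapsto d(H\circ(f\times f))(z_1,z_2)$ on $\Emb^r\times(S^1)^{(2)}$, get surjectivity from perturbations $X$ with $X(z_1)=X(z_2)=0$ and prescribed $X'(z_i)$, and then pass from $C^r$ to $C^\infty$ as in Proposition \ref{prop:E1}. Your openness argument is a useful refinement of the paper's one-line assertion that the transversality condition is open: it uses the uniform Morse--Bott nondegeneracy along the diagonal to deal with the noncompactness of $(S^1)^{(2)}$.
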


\begin{proof}
We shall prove that $\Emb^r_2$ is open and dense in $\Emb^r(S^1,\bC)$ in the $C^r$ topology, for all $r \in \bZ^+$, $r \ge 2$.
Since $\Emb^\infty_2 = \Emb^\infty \cap \Emb^r_2$, the proposition follows by applying the proof of Proposition \ref{prop:E1}, changing each ``1" to a ``2".

Consider the map
\begin{eqnarray*}
e: \Emb^r(S^1,\bC) \times (S^1)^{(2)} &\longrightarrow& T^*(S^1 \times S^1) \\
(f,z_1,z_2) &\longmapsto& d(H \circ (f \times f))(z_1,z_2).
\end{eqnarray*}

We claim that $e$ is a submersion.
Because $e$ is a bundle map over $(S^1)^{(2)}$, it suffices to check that the differential of $e$ at each point $(f,z_1,z_2)$ in the domain surjects onto the tangent space to the cotangent fiber over its image.
In coordinates $(\xi_1,\xi_2) =(\partial/\partial z_1, \partial/\partial z_2)$ on $T^*_{(z_1,z_2)}(S^1 \times S^1)$, we have
\begin{equation}
\label{e:e}
e(f,z_1,z_2) = \left(\frac{\del}{\del z_1} \frac{|f(z_1)-f(z_2)|^2}4, \frac{\del}{\del z_2} \frac{|f(z_1)-f(z_2)|^2}4 \right).
\end{equation}
Use coordinates $(d \xi_1, d \xi_2)$ on the tangent space of the fiber, and select an arbitrary tangent vector $(a,b)$ in these coordinates.
We argue that there exists a tangent vector $X \in T_f \Emb^r(S^1,\bC)$ such that the projection of $de(X)$ onto the tangent space to the fiber is $(a,b)$.
Recall that $X$ is a $C^r$ vector field along $f$.
The value of $de(X)$ in the component $d \xi_1$ is computed as 
\[
\lim_{t \to 0} \frac1t  \left( \frac{\del}{\del z_1} \frac{|(f+tX)(z_1) - (f+tX)(z_2)|^2}4 - \frac{\del}{\del z_1} \frac{|f(z_1) - f(z_2)|^2}4 \right), 
\]
which simplifies to
\[
\frac12 \langle f(z_1)-f(z_2), X'(z_1) \rangle+ \frac12 \langle X(z_1)-X(z_2), f'(z_1) \rangle.
\]
Similarly, the value in the component $d \xi_2$ equals
\[
-\frac12 \langle f(z_1)-f(z_2), X'(z_2) \rangle - \frac12 \langle X(z_1)-X(z_2), f'(z_2) \rangle.
\]
Setting $X(z_1) = X(z_2) = 0$, these become
\[
\frac12 \langle f(z_1)-f(z_2), X'(z_1) \rangle \quad \textup{and} \quad -\frac12 \langle f(z_1)-f(z_2), X'(z_2) \rangle.
\]
Because $f(z_1) \ne f(z_2)$, we can pick values and $X'(z_1)$ and $X'(z_2)$ to make these values equal to $a$ and $b$, respectively.
Finally, we may extend the given values (using bump functions) to produce the required $X \in T_f \Emb^r(S^1,\bC)$.
This completes the verification that $e$ is a submersion.

As in the proof of Lemma \ref{lem:transverse1}, the set of $f \in \Emb^r(S^1,\bC)$ such that the graph of $d(H \circ (f \times f))$ is transverse to the 0-section $(S^1)^{(2)} \subset T^* ((S^1)^{(2)})$ can be identified with the set of regular values of the map obtained by composing the inclusion and projection maps $e^{-1}((S^1)^{(2)}) \into \Emb^r(S^1,\bC) \times (S^1)^{(2)} \longrightarrow \Emb^r(S^1,\bC)$.
As before, this map is a Fredholm map between Banach manifolds, and the Sard-Smale theorem shows that this set is dense in $\Emb^r(S^1,\bC)$ in the $C^r$ topology.
The transversality condition is an open condition in $C^r(S^1,\bC)$ for all $r \ge 2$.
Therefore, the set of $f \in \Emb^r(S^1,\bC)$ such that $H \circ (f \times f)$ is Morse away from $\Delta(S^1)$ is an open, dense set.
This establishes the claim about $\Emb^r_2(S^1,\bC)$.
\end{proof}

\begin{lem}
\label{lem:smooth2}
Suppose that $f \in \Emb^\infty_2$ and $\gamma = f(S^1) \subset \bC$.
Then there exists $\epsilon > 0$ such that 
\[
\cR_\epsilon(\gamma) := \{ (z,w,t) \in \gamma \times \gamma \times (-\epsilon,\epsilon) : t=0 \textup{ and } dH(z,w) = 0, \textup{ or else } t \ne 0 \text{ and }\phi_t(z,w) \in \gamma \times \gamma \} 
\]
is a smooth submanifold of $\gamma \times \gamma \times (-\epsilon,\epsilon)$.
One component of $\cR_\epsilon(\gamma)$ is the annulus $\Delta(\gamma) \times (-\epsilon,\epsilon)$, and every other component is 1-dimensional and contains a unique point of the form $(z,w,0)$, where $z \ne w$ and $d H(z,w) = 0$.
\end{lem}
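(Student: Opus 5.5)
We follow the outline sketched at the start of \S\ref{sec:binormals}: a Weinstein neighborhood of $L_0$ plus a rescaling in $t$. By the Weinstein neighborhood theorem, fix a symplectomorphism $\Psi$ from a neighborhood of the zero section in $T^*L_0$ onto a neighborhood $N(L_0)$ of $L_0=\gamma\times\gamma$ in $\bC^2$, with $\Psi$ the identity on $L_0$. For $|t|$ small, $\rho_t(L_0)=\phi^H_t(L_0)$ is $C^1$-close to $L_0$, so $\Psi^{-1}(\rho_t(L_0))$ is a graph. Since $\rho_t$ is Hamiltonian isotopic to the identity through exact Lagrangians, it is the graph of $dS_t$ for a smooth family of functions $S_t : L_0 \to \bR$, normalized by $S_0 = 0$. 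The first variation gives $\partial_t S_t|_{t=0} = H|_{L_0}$ up to a constant, because the Hamiltonian flow of $H$ corresponds in $T^*L_0$ to the flow whose generating function is $H\circ\Psi$ to first order.

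Next we rescale. We write $S_t = t\,G_t$, where $G_t = \int_0^1 \partial_s S_{st}\,ds$ is smooth in $(x,t)$ with $G_0 = H|_{L_0}$. For $t\neq 0$, a point $(x,t)$ lies in $\cL_0\cap\cL_1$ iff $dS_t(x)=0$, iff $dG_t(x)=0$. We then set
\[
\cR_\epsilon(\gamma) = \{(x,t) \in L_0\times(-\epsilon,\epsilon) : dG_t(x) = 0\}.
\]
At $t=0$ this recovers exactly $\Crit(H|L_0)\times\{0\}$. This matches the definition in the statement, after translating points of $\rho_t(L_0)\cap L_0$ back by $\rho_{-t}$; that translation is a diffeomorphism in $(z,w,t)$ and so harmless. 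Equivalently, $\cR_\epsilon(\gamma) = \cL_0\cap X$, where $X$ is the image of $\{(\Psi(\text{graph of }dG_t),t)\}$.

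Now we use that $f\in\Emb^\infty_2$. Near a binormal $x_0\notin\Delta(\gamma)$, $H|L_0$ is Morse, so the map $(x,t)\mapsto dG_t(x)$ has surjective $x$-derivative $\mathrm{Hess}\,H(x_0)$ at $(x_0,0)$. By the implicit function theorem, its zero set near $(x_0,0)$ is a smooth arc $t\mapsto (x(t),t)$, which meets $t=0$ only at $x_0$. Since $\Crit(H|L_0)\setminus\Delta(\gamma)$ is a finite set of nondegenerate points, and by compactness of $L_0$, we can shrink $\epsilon$ so that every zero of $dG_t$ with $|t|<\epsilon$ lies either on one of these arcs or in a small neighborhood $U$ of $\Delta(\gamma)$.

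The main obstacle is the diagonal. There $dG_t$ vanishes identically along $\Delta(\gamma)$ for every $t$, because $\rho_t$ fixes $\Delta(\bC)$ pointwise, and we must rule out extra zeros in $U\setminus\Delta(\gamma)$. We plan to use the Morse--Bott property of Lemma~\ref{lem:clean2}: the Hessian of $G_0=H|L_0$ is nondegenerate in the direction normal to $\Delta(\gamma)$. The normal Hessian of $G_t$ depends continuously on $t$, so it stays nondegenerate for small $|t|$. A Morse--Bott (parametric Morse lemma) argument then shows that the zero set of $dG_t$ in $U$ is exactly $\Delta(\gamma)$, and that the intersection is clean along $\Delta(\gamma)\times(-\epsilon,\epsilon)$. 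This is consistent with Lemma~\ref{lem:cleanlocus}. Alternatively, a direct check works: rotating a short chord $\overline{zw}$ by a small nonzero angle $t$ moves it off $\gamma\times\gamma$, because $\gamma$ is $C^2$ and the chord is nearly tangent.

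It follows that $\cR_\epsilon(\gamma)$ is the disjoint union of the annulus $\Delta(\gamma)\times(-\epsilon,\epsilon)$ and finitely many arcs, each containing a unique point $(z,w,0)$ with $z\ne w$ and $dH(z,w)=0$. Each arc is a smooth 1-dimensional submanifold.
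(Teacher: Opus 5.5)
Your proposal is correct and follows essentially the paper's proof. Both arguments pass to a Weinstein neighborhood of $L_0$, take generating functions $S_t$ for $\phi^H_t(L_0)$, and rescale to the smooth family $S_t/t$ (the paper's $F_t$, your $G_t$), whose $t=0$ member is $H|L_0$. Both then use the Morse condition away from $\Delta(\gamma)$ and the Morse--Bott condition of Lemma~\ref{lem:clean2} along $\Delta(\gamma)$, and finish by shrinking $\epsilon$.

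The differences are minor. The paper writes $S_t$ explicitly as an action integral (citing McDuff--Salamon) and handles the diagonal for $t\neq 0$ via Lemma~\ref{lem:cleanlocus}; you instead propagate nondegeneracy of the normal Hessian by continuity in $t$. Your formula for $G_t$ should be read as $\int_0^1 (\partial_u S_u)|_{u=st}\,ds$, since taken literally it integrates to $S_t$ rather than $S_t/t$.
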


\noindent
The same result and proof holds with $(\pi-\epsilon,\pi+\epsilon)$ in place of $(-\epsilon,\epsilon)$.
Lemma \ref{lem:smooth2} thereby asserts that $\Crit(H) \times \{0,\pi\} \subset \gamma \times \gamma \times S^1$ compactifies $\cL_0 \cap \cL_1$ under the assumption that $f \in \Emb^\infty_2$.

\begin{proof}
Write $L = L_0 =  \gamma \times \gamma$.
Let $N(L) \subset \bC^2$ denote a Weinstein neighborhood of $L$, and write
\[
\Phi : (N(L),\omega | N(L)) \to (T^* L, d \lambda_\mathrm{can})
\]
for a symplectomorphism from $N(L)$ onto its image with $\Phi(q) = q$ for all $q \in L$, identifying $L$ with the 0-section of $T^* L$.
There exists a value $\epsilon > 0$ such that for all $t \in (-\epsilon,\epsilon)$, the displaced Lagrangian $\phi^H_t(L)$ is contained in $N(L)$ and maps to the graph of an exact 1-form in $T^* L$ under $\Phi$.
A primitive of this 1-form, also known as a generating function, is given as follows.
Define
\begin{eqnarray*}
S_t(q) : L \times (-\epsilon,\epsilon) &\longrightarrow& \bR, \\
(q,t) &\longmapsto& \int_0^t (H(\gamma_q(s)) - \lambda_\mathrm{can} (\dot{\gamma_q}(s))) ds.
\end{eqnarray*}
Here $\gamma_q(s)$ denotes the trajectory of the Hamiltonian vector field $X_H$ with the initial condition $\gamma_q(0) = q$.
Note that the function $S_t(q)$ is a version of symplectic action, and it coincides with the action when $q \in L \cap \phi^H_t(L)$.
It is smooth in $(t,q)$.
By \cite[Proposition 9.4.2]{mcduffsalamon}, we have $\Phi(\phi_t(L)) = \mathrm{graph}(d S_t)$.

Consider the smooth function
\begin{eqnarray*}
F_t(q) : L \times (-\epsilon,\epsilon) &\longrightarrow& \bR, \\
(q,t) &\longmapsto& \int_0^1 S'_{st}(q) ds.
\end{eqnarray*}
Its value at $t \ne 0$ is
\[
F_t(q) =  \int_0^1 S'_{st}(q) ds = \int_0^1  \frac1t  \frac{d}{ds} S_{st}(q) ds =  \frac1t \frac d{ds} \int_0^1 S_{st}(q) ds = \frac1t S_t(q).
\]
Its value at $t=0$ is
\[
F_0(q) = S'_0(q) = \left. \frac{d S_t(q)}{dt} \right|_{t=0}
= H(\gamma_q(0)) - \lambda_\mathrm{can}(\dot{\gamma_q}(0))
= H(q):
\]
the first two equations are definitions, the third applies the Fundamental Theorem of Calculus, and the fourth substitutes $\gamma_q(0) = q$ and $\lambda_\mathrm{can} | L \equiv 0$.
It follows that the critical points of $F_t : L \to \bR$ coincide with the critical points of $S_t$ when $t\ne0$ and of $H$ when $t=0$.

Form the locus
\[
X = \bigcup_{-\epsilon < t < \epsilon} \graph(d F_t) \times \{t\} \subset T^* L \times (-\epsilon,\epsilon).
\]
By what we argued about the critical points of $F_t(q)$, the intersection of $X$ with $L \times (-\epsilon,\epsilon) \subset T^*L \times (-\epsilon,\epsilon)$ is equal to
\[
\{ (q,t) : t = 0 \textup{ and } dH(q) = 0, \textup{ or } t \ne 0 \textup{ and } dS_t(q) = 0 \}.
\]
The mapping $\Phi \times \mathrm{Id} : N(L) \times (-\epsilon,\epsilon) \to T^* L \times (-\epsilon,\epsilon)$ carries $L \times (-\epsilon,\epsilon)$ to itself and $\cR_\epsilon(\gamma)$ into this intersection.
To prove the desired statement about $\cR_\epsilon(\gamma)$, it suffices to prove the analogous statement about this intersection.

The locus $X$ is a smooth submanifold, because $F_t(q)$ is smooth.
Its intersection with $L \times (-\epsilon,\epsilon)$ contains $\Delta(\gamma) \times (-\epsilon,\epsilon)$.
We claim that it is a clean component of intersection.
The tangent spaces to $X$, $L \times (-\epsilon, \epsilon)$, and $\Delta(\gamma) \times (-\epsilon,\epsilon)$ at $(z,z,0)$ are just the tangent spaces to $\graph(dH)$, the 0-section $L$, and $\Delta(\gamma)$, each multiplied against $T_0 (-\epsilon,\epsilon)$.
By Lemma \ref{lem:clean2}, this checks the cleanliness of the intersection of $X$ and $L \times (-\epsilon,\epsilon)$ along $\Delta(\gamma) \times (-\epsilon,\epsilon)$ at each point $(z,z,0)$.
Lemma \ref{lem:cleanlocus} checks the cleanliness of the intersection at each point $(z,z,t)$ with $0 < |t| < \epsilon$.

The assumption that $f \in \Emb^\infty_2$ means that $F_0 = H | L$ is Morse-Bott and is Morse away from $\Delta(\gamma)$.
Hence the intersection of $\graph(d F_0)$ with $L \subset T^*L$ is transverse away from $\Delta(\gamma)$.
It follows that the intersection of $\graph(d F_0)$ with $L \times (-\epsilon,\epsilon) \subset T^* L \times (-\epsilon,\epsilon)$ is transverse away from $\Delta(\gamma) \times \{0\}$.
It follows that by lowering the value $\epsilon$, the intersection of $X$ with with $L \times (-\epsilon,\epsilon) \subset T^* L \times (-\epsilon,\epsilon)$ is transverse away from $\Delta(\gamma) \times (-\epsilon,\epsilon)$, hence a smooth 1-manifold.
Moreover, each component contains a unique point $(q,0)$ with $q \notin \Delta(\gamma)$ and $dH(q)=0$.
\end{proof}

\begin{proof}[Proof of \Cref{thm:genericity}]
Define $\Emb^\infty_\mathrm{reg} = \Emb^\infty_1 \cap \Emb^\infty_2$.
By Propositions \ref{prop:E1} and \ref{prop:E2}, it follows that $\Emb^\infty_\mathrm{reg}$ is open and dense in $\Emb^\infty$ in the $C^\infty$ topology.
Let $f \in \Emb^\infty_\mathrm{reg}$ and write $\gamma = f(S^1)$.
Because $f \in \Emb^\infty_1$, Corollary \ref{cor:niceintersection} ensures that $\cR(\gamma)$ is a smooth 1-manifold away from $\gamma \times \gamma \times \{0,\pi\}$.
Because $f \in \Emb^\infty_2$, Lemma \ref{lem:smooth2} ensures that $\cR(\gamma)$ is a smooth 1-manifold nearby $\gamma \times \gamma \times \{0\}$.
The same result, \emph{mutatis mutandis}, shows that it is a smooth 1-manifold nearby $\gamma \times \gamma \times \{ \pi \}$, as well.
Thus, $\cR(\gamma)$ is a smooth, closed 1-manifold.
\end{proof}

\bibliographystyle{amsplain}
\bibliography{works-cited.bib}

\providecommand{\bysame}{\leavevmode\hbox to3em{\hrulefill}\thinspace}
\providecommand{\MR}{\relax\ifhmode\unskip\space\fi MR }
% \MRhref is called by the amsart/book/proc definition of \MR.
\providecommand{\MRhref}[2]{%
  \href{http://www.ams.org/mathscinet-getitem?mr=#1}{#2}
}
\providecommand{\href}[2]{#2}
\begin{thebibliography}{10}

\bibitem{asanoike}
Tomohiro Asano and Yuichi Ike, \emph{The rectifiable rectangular peg problem},
  {\tt arxiv:2412.21057} (2024).

\bibitem{Bieberbach1915}
Ludwig Bieberbach, \emph{{\"U}ber eine extremaleigenschaft des kreises},
  Jahresber. Dtsch. Math.-Ver. \textbf{24} (1915), 247--250.

\bibitem{gg1973}
Martin Golubitsky and Victor Guillemin, \emph{Stable mappings and their
  singularities}, Graduate Texts in Mathematics, vol.~14, Springer-Verlag, New
  York, 1973.

\bibitem{greenelobb1}
Joshua~Evan Greene and Andrew Lobb, \emph{The rectangular peg problem}, Ann. of
  Math. (2) \textbf{194} (2021), no.~2, 509--517.

\bibitem{greenelobb2}
\bysame, \emph{Cyclic quadrilaterals and smooth {J}ordan curves}, Invent. Math.
  \textbf{234} (2023), no.~3, 931--935.

\bibitem{greenelobb3}
\bysame, \emph{Floer homology and square pegs}, {\tt arXiv:2404.05179} (2024).

\bibitem{greenelobb4}
\bysame, \emph{Square pegs betweeen two graphs}, to appear in Comment. Math.
  Helv. (2024).

\bibitem{leczap}
R\'{e}mi Leclercq and Frol Zapolsky, \emph{Spectral invariants for monotone
  {L}agrangians}, J. Topol. Anal. \textbf{10} (2018), no.~3, 627--700.

\bibitem{mcduffsalamon}
Dusa McDuff and Dietmar Salamon, \emph{Introduction to symplectic topology},
  third ed., Oxford Graduate Texts in Mathematics, Oxford University Press,
  Oxford, 2017.

\bibitem{meyerson1981}
Mark~D. Meyerson, \emph{Balancing acts}, Topology Proc. \textbf{6} (1981),
  no.~1, 59--75 (1982).

\bibitem{pak}
Igor Pak, \emph{The discrete square peg problem}, {\tt arxiv.org/0804.0657}
  (2008).

\bibitem{schnirelman1929}
Lev Schnirleman, \emph{On some geometric properties of closed curves (in
  {R}ussian)}, Usp. Mat. Nauk \textbf{10} (1929), 34--44.

\bibitem{schwartz2020}
Richard~Evan Schwartz, \emph{A trichotomy for rectangles inscribed in {Jordan}
  loops}, Geometriae Dedicata \textbf{208} (2020), 177--196.

\bibitem{toeplitz1911}
Otto Toeplitz, \emph{Ueber einige {A}ufgaben der {A}nalysis situs},
  Verhandlungen der {S}chweizerischen {N}aturforschenden {G}esellschaft (1911),
  no.~4, 197.

\end{thebibliography}
\end{document}